\newcommand{\Coh}{\operatorname{Coh}}
\newcommand{\Auteq}{\operatorname{Auteq}}
\newcommand{\subjclass}[2][2010]{%
  \let\@oldtitle\@title%
  \gdef\@title{\@oldtitle\footnotetext{#1 \emph{Mathematics subject classification.} #2}}%
}
\newcommand{\keywords}[1]{%
  \let\@@oldtitle\@title%
  \gdef\@title{\@@oldtitle\footnotetext{\emph{Key words and phrases.} #1.}}%
}
\title{Exceptional sheaves \\ on the Hirzebruch surface $\bF_2$}
\author{Shinnosuke Okawa and Hokuto Uehara}
\date{}
\subjclass{Primary: 13D09}
\keywords{ derived category, exceptional object, Hirzebruch surface, weak del Pezzo
surface}
\begin{document}

\maketitle

\begin{abstract}
We investigate exceptional sheaves on the Hirzebruch surface
$
\bF_2
$,
as the first attempt toward the classification of
exceptional objects on weak del Pezzo surfaces.
\end{abstract}

\tableofcontents

\section{Introduction}
Let
$
X
$
be a smooth projective variety over an algebraically closed field
$
k
$.
The bounded derived category of coherent sheaves
on
$
X
$,
which we denote by
$
D(X)
=
D^b \coh X
$,
admits a structure of
$
k
$-linear triangulated category (see \cite{MR2244106}).
An object
$
\cE
$
of a
$
k
$-linear triangulated category is called \emph{exceptional}
if it satisfies the condition
\[
\Hom(\cE , \cE[i]) =
\begin{cases}
0 & i\ne 0 \\
k\cdot id_\cE & i=0
\end{cases}
\]
(see \cite[Definition 1.57]{MR2244106}).
For example,
when
$
X
$
is a Fano manifold in characteristic zero,
the Kodaira vanishing theorem implies that
line bundles on
$X$
are exceptional objects of
$
D(X)
$.
Classification of exceptional objects in a given triangulated category
is basic but quite a nontrivial issue.

Exceptional objects of the derived category of del Pezzo surfaces (i.e., Fano manifolds of dimension $2$)
were thoroughly studied in the paper \cite{MR1286839}.
Among others, it was shown in \cite[Propositions 2.9 and 2.10]{MR1286839} that any exceptional object
on such surfaces is isomorphic to a shift of an exceptional vector bundle or a line bundle
on a $(-1)$-curve.

Since exceptional objects on del Pezzo surfaces are well understood,
it is natural to work on \emph{weak del Pezzo} surfaces;
i.e., those non-singular surfaces with nef and big anti-canonical line bundles.
The purpose of this paper is to study
exceptional sheaves on the Hirzebruch surface of degree $2$, i.e.,
$
\bF_2
=
\bP_{\bP^1}{(\cO_{\bP^1} \oplus \cO_{\bP^1} (-2))}
$,
which is the easiest example of weak del Pezzo surfaces.

Unlike the case of del Pezzo surfaces the situation becomes much more involved.
This is due to the existence of the non-standard autoequivalences
of the derived category which are called \emph{twist functors}.

\begin{definition}[\cite{Seidel-Thomas}]\label{df:twist}
Let
$
X
$ be a smooth projective variety. 
\begin{enumerate}[(1)]
\item
We say that an object $\alpha \in D(X)$ is \emph{spherical} if 
we have $\alpha \otimes \omega_{X} \cong \alpha$ and
$$
\Hom (\alpha,\alpha[i])
\cong
\begin{cases}  0 & i\ne 0,\dim X\\
                                                k\cdot id_\alpha & i=0,\dim X. 
\end{cases}
$$

\item
Let $\alpha\in D(X)$ be a spherical object. 
We consider the mapping cone 
$$
\mathcal{C}=\Cone(\pi_1^*\alpha^\vee\otimes \pi_2^*\alpha \to \mathcal{O}_{\Delta})
$$
of the natural evaluation
$
\pi_1^*\alpha^\vee\otimes \pi_2^*\alpha \to \mathcal{O}_{\Delta}
$,
where
$
\Delta \subset X \times X
$
is the diagonal and $\pi_i$ is the projection from
$X\times X$ to the $i$th factor. Then the integral functor 
$T_{\alpha}:=\Phi^{\mathcal{C}}_{X\to X}$ defines an autoequivalence of $D(X)$, 
called the \emph{twist functor} along the spherical object $\alpha$. 
By definition, for $\beta\in D(X)$, we have an exact triangle
\begin{equation}\label{eq:twist_def}
\RHom(\alpha,\beta)\otimes
\alpha\xrightarrow[]{\textrm{evaluation}}
\beta \longrightarrow
T_{\alpha}\beta.
\end{equation}
We can also define the \emph{inverse twist functor} $T'_{\alpha}$
so that it is a quasi-inverse of $T_{\alpha}$ 
and there exists an exact triangle
\begin{equation}\label{eq:inverse-twist_def}
T'_{\alpha}\beta \longrightarrow \beta\xrightarrow[]{\textrm{coevaluation}}\RHom(\beta, \alpha)^{\vee}\otimes
\alpha.
\end{equation}
\end{enumerate}
\end{definition}

\begin{remark}\label{rm:spherical_twist_on_K_0}
The triangle \pref{eq:twist_def} yields the equality 
$$
[ T_{\alpha} \beta ]=[\beta]-\chi (\alpha,\beta)[\alpha]
$$
in the
$K$-group
$
K_0(X)
$. Here $\chi(-,-)$ denotes the Euler pairing.
From this and \pref{df:twist} (1), 
we see that
$$
\chi (\alpha,T_\alpha\beta)=(-1)^{\dim X+1}\chi (\alpha,\beta).
$$
As a consequence, if $\dim X$ is even,
the
(inverse) twist functor acts as a reflection
on 
$
K_0(X)
$.
In particular, if we do it twice, the action on
$K_0(X)$
is trivial.
\end{remark}

Note that any line bundle on the ($-2$)-curve on $\bF_2$
provides an example of a spherical object in $D(\bF_2)$.
Twisting exceptional sheaves by those spherical objects,
we can produce many more exceptional objects on $\bF_2$ and
they are not necessarily isomorphic to shifts of sheaves.
Note that the group of autoequivalences of $\bF_2$ is known by
\cite[Theorem 1]{MR3162236} and satisfies
\[
\Auteq{(D(\bF_2))}
=
(B(\bF_2) \rtimes \bZ\cO(F) )
\times \Aut(\bF_2)\times \bZ[1],
\]
where
$
F
$
is a fiber of the
$
\bP^1
$-bundle
$
\bF_2 \to \bP^1
$
and
$B(\bF_2)$ is the subgroup generated by spherical twists.
Here
$ [ 1 ] $ is the shift by $ 1 $ of complexes,
and
$
 \cO ( F )
$
acts by tensoring with this line bundle.
Since
$
 \Aut ( \bF_2 ) = \Aut^0 ( \bF_2 )
$,
and line bundles and spherical objects are rigid,
we see that elements of
$
 \Aut ( \bF_2 )
$
commute with all other autoequivalences.

Despite this complication, we expect the following conjecture.

\begin{conjecture}\label{cj:main_conjecture}
For any exceptional object $\cE\in D(\bF_2)$, there exists
an autoequivalence $\Phi\in\Auteq{(D(\bF_2))}$ such that
$\Phi(\cE)$ is an exceptional vector bundle on $\bF_2$.
\end{conjecture}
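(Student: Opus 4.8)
The plan is to split the problem into a \emph{numerical} stage and a \emph{sheaf-theoretic} stage. On the numerical side I would work in the numerical Grothendieck group $N = K_0^{\mathrm{num}}(\bF_2) \cong \bZ^4$, which records the triple $(\operatorname{rk}, c_1, \chi)$ and carries the (non-symmetric) Euler pairing $\chi(-,-)$. Any exceptional $\cE$ satisfies $\chi([\cE],[\cE]) = 1$, so its class is a root for this form. By Remark~\ref{rm:spherical_twist_on_K_0}, each spherical twist along a line bundle on the $(-2)$-curve $C$ acts on $N$ as a reflection in a class $[\alpha]$ with $\chi([\alpha],[\alpha]) = 2$, while tensoring by $\cO(F)$ acts by an isometry of $\chi$ and the shift acts as $-1$; since $\Aut(\bF_2) = \Aut^0(\bF_2)$ is connected it acts trivially on the discrete lattice $N$. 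Hence the image of $\Auteq(D(\bF_2))$ in $\mathrm{Aut}(N,\chi)$ is generated by these reflections together with the $\cO(F)$-translation and $\pm1$. The first goal is to show that the orbit of $[\cE]$ under this group contains the class $[\mathcal{F}]$ of an exceptional vector bundle on $\bF_2$, i.e. a class of rank $\geq 1$ of bundle type.

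To carry out this reduction I would introduce a height function on $N$ (built from $\operatorname{rk}$ together with the slopes $\chi(\cO,-)$ and $\chi(\cO(F),-)$) and prove that whenever $[\cE]$ is not already of vector-bundle type, some generator strictly decreases the height. This is a standard but delicate root-system argument: one must describe a fundamental domain for the reflection group and rule out ``stuck'' configurations, the most dangerous being rank-$0$ classes supported on $C$, which are precisely the spherical classes ($\chi = 2$) and therefore never coincide with an exceptional class ($\chi = 1$). Combined with the classification of exceptional bundles on $\bF_2$ --- a mutation-generated list analogous to Kuleshov's classification on del Pezzo surfaces --- this should identify the fundamental domain with vector-bundle classes and finish the numerical stage.

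It then remains to upgrade from the class to the object. After applying the autoequivalence $\Phi$ produced above, we have an exceptional object $\cE' = \Phi(\cE)$ whose class $[\cE']$ is that of a vector bundle. I would first show that $\cE'$ is a shift of a pure sheaf by controlling its cohomology sheaves: an exceptional object on a surface has at most two nonzero cohomology sheaves, and a spectral-sequence together with $\Hom(-,-[i])$-vanishing argument in the spirit of \cite[Propositions~2.9 and 2.10]{MR1286839} should rule out all but one. Next I would show that the resulting sheaf is torsion-free and then reflexive, hence locally free on the smooth surface $\bF_2$, once more using exceptionality: torsion, or a nontrivial cokernel of the double-dual map, would produce forbidden endomorphisms or self-extensions. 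Matching the rank with the normalized class would identify it as the desired exceptional vector bundle.

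The hard part will be this final object-level step on $\bF_2$. Unlike the del Pezzo case, Kodaira and Serre positivity fail along the $(-2)$-curve, so the cohomology-sheaf vanishing that forces purity can genuinely break, and the spherical twists produce honest complexes rather than shifts of sheaves; the real content is to track exactly how torsion supported on $C$ interacts with exceptionality, equivalently which twists are needed to sweep that torsion away. A possibly cleaner alternative is to argue via Bridgeland stability: exhibit a stability condition whose stable objects of positive rank are the exceptional bundles, observe that every exceptional object is stable for some condition in the $\Auteq$-orbit of a geometric one, and transport along $\Auteq$ to realize $\cE$ as a shift of a Gieseker-stable, hence locally free, exceptional sheaf. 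I expect the numerical stage to be essentially combinatorial, with the genuine difficulty concentrated in this purity and local-freeness argument.
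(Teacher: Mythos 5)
You are attempting to prove a statement that the paper itself does \emph{not} prove: \pref{cj:main_conjecture} is stated as a conjecture and remains open there. What the paper actually establishes is (i) the special case where $\cE$ is a \emph{sheaf} with nontrivial torsion (\pref{th:main_conjecture;the_case_of_sheaves}), proved by a hands-on analysis of the torsion/torsion-free decomposition showing that the two canonical short exact sequences are realized by a spherical twist and an inverse spherical twist along $\cO_C(a)$ and $\cO_C(a+1)$; and (ii) the purely class-level statement (\pref{pr:existence_of_sheaf_in_the_class}) that $[\cE]=\pm[\cF]$ for some exceptional bundle $\cF$, proved not by a root-system argument but by deforming $\bF_2$ to $\bF_0$ and invoking the del Pezzo classification. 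So there is no proof in the paper to compare yours against; your proposal must stand on its own, and it does not.

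The genuine gap is that your numerical stage, even if the height-function and fundamental-domain arguments were carried out in full, controls only the class in $K_0(\bF_2)$, and the paper shows this is very far from determining the object: by \pref{rm:spherical_twist_on_K_0} all \emph{double} spherical twists act trivially on $K_0(\bF_2)$, the set of exceptional sheaves in a single class is the infinite family $\{\cE_i \mid i\ge -1\}$ of \pref{sc:Exceptional_sheaves_sharing_classes}, and \pref{rm:action_of_double_spherical_twists} states explicitly that the authors do not know whether the group generated by double twists acts transitively on the set of exceptional objects sharing a class. Hence the entire content of the conjecture sits in your ``object-level step,'' and the method you propose for it fails: the claim that exceptionality forces $\Phi(\cE)$ to be a shift of a pure sheaf via Kuleshov--Orlov-type cohomology-sheaf vanishing uses ampleness of $-K$ in an essential way, and on $\bF_2$ it is simply false that exceptional objects are shifts of sheaves --- the objects $\cE_i$ with $i<-1$ constructed in the paper have two nonzero cohomology sheaves, one of them torsion on $C$, exactly the configuration your argument would need to exclude. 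Your Bridgeland-stability alternative hides the same gap: the assertion that every exceptional object is stable for some stability condition in the $\Auteq$-orbit of a geometric one is essentially a restatement of the conjecture, so as sketched that route is circular. In short, the proposal reduces the conjecture to precisely the open problem the paper isolates (relating exceptional objects with equal classes by autoequivalences) and offers no new mechanism to solve it.
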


As a special case, we prove the following theorem.
We denote by
$
C \subset \bF_2
$
the unique
$
(-2)
$-curve.


\begin{theorem}\label{th:main_conjecture;the_case_of_sheaves}
Let $\cE$ be an exceptional sheaf on $\bF_2$ with a nontrivial torsion subsheaf.
Consider the standard decomposition of the sheaf $\cE$ 
\begin{equation}\label{eq:TEF_sequence}
0\to \cT^-\to \cE\to \cF^-\to 0
\end{equation}
into the torsion part $\cT^-$ and the torsion-free part $\cF^-$.
Also let 
\begin{equation}\label{eq:FET_sequence}
0\to \cF^+\to \cE\to \cT^+\to 0
\end{equation}
be the standard decomposition of the sheaf $\cE$ into
the restriction $\cT^+$ of $\cE$ to $C$ 
and the kernel $\cF^+$ of the restriction map.
Then the following hold.

	\begin{enumerate}[(1)]
	\item\label{it:Excis_the_inverse_twist_of_F}
	There exists an integer $a$ satisfying the following properties.
		\begin{itemize}
		\item
		The triangle \pref{eq:twist_def} for $\alpha=\cO_C(a)$ and $\beta=\cE$
		becomes a short exact sequence
		\begin{equation}\label{eq:the_inverse_twist_sequence}
		0
		\to
		\Hom{}( \cO_C(a),\cE)\otimes \cO _C(a)
		\to
		\cE
		\to
		T_{\cO_C(a)} \cE
		\to
		0,
		\end{equation}
		and is isomorphic to the sequence \pref{eq:TEF_sequence}.
		
		\item
		The triangle
		\pref{eq:inverse-twist_def} for $\alpha=\cO_C(a+1)$ and $\beta=\cE$
		becomes a short exact sequence
	\begin{equation}\label{eq:the_twist_sequence}
	0
	\to T'_{\cO_C(a+1)}\cE
	\to  \cE
	\to  \Hom(\cE, \cO_C(a+1))^\vee\otimes \cO _C(a+1)
	\to 0,
	\end{equation}
		and is isomorphic to
		the sequence \pref{eq:FET_sequence}.
		\end{itemize}

	 \item\label{it:F_is_an_exceptional_vector_bundle}
	$\cF^+$ and $\cF^-$ are exceptional vector bundles. Moreover they are related to each other by 
	$$\cF^-\cong \cF^+\otimes \cO (C).$$	
	\end{enumerate}
\end{theorem}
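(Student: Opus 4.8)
The plan is to reduce the whole statement to a single structural description of the torsion, namely
$$
\cT^{-}\cong\cO_C(a)^{\oplus m}
\quad\text{and}\quad
\cE|_C\cong\cO_C(a+1)^{\oplus n}
$$
for a single integer $a$ and some $m,n\ge 1$; in particular $\cT^{-}$ is scheme-theoretically supported on the \emph{reduced} curve $C$. Granting this, part \pref{it:Excis_the_inverse_twist_of_F} is comparatively formal. For $\alpha=\cO_C(a)$ a map from the torsion sheaf $\cO_C(a)$ to the torsion-free $\cF^{-}$ vanishes, so $\Hom(\cO_C(a),\cE)\cong\Hom(\cO_C(a),\cT^{-})=k^{\oplus m}$ and $\cT^{-}=\Hom(\cO_C(a),\cE)\otimes\cO_C(a)$, with the evaluation map of \pref{eq:twist_def} being the inclusion $\cT^{-}\hookrightarrow\cE$ of \pref{eq:TEF_sequence}. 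To see that \pref{eq:twist_def} collapses to the short exact sequence \pref{eq:the_inverse_twist_sequence} I would show $\RHom(\cO_C(a),\cE)$ is concentrated in degree $0$: here $\Hom(\cO_C(a),\cE[2])\cong\Hom(\cE,\cO_C(a))^{\vee}=\Hom_C(\cE|_C,\cO_C(a))^{\vee}=0$ by Serre duality (using $\omega_{\bF_2}|_C\cong\cO_C$) and $\cE|_C\cong\cO_C(a+1)^{\oplus n}$, while $\Hom(\cO_C(a),\cE[1])=0$ follows from the long exact sequence of \pref{eq:TEF_sequence}, in which sphericity of $\cO_C(a)$ kills the $\cT^{-}$-terms, together with the explicit splitting types determined below. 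The analogous analysis of the coevaluation triangle \pref{eq:inverse-twist_def} with $\alpha=\cO_C(a+1)$ produces \pref{eq:the_twist_sequence} from \pref{eq:FET_sequence}, giving also $\cF^{-}=T_{\cO_C(a)}\cE$ and $\cF^{+}=T'_{\cO_C(a+1)}\cE$.

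I expect the structural claim to be the main obstacle, since it is where exceptionality is used essentially. As $\cE$ is exceptional it is simple and rigid, $\Hom(\cE,\cE[1])=0$. First I would exclude a nonzero $0$-dimensional torsion subsheaf, and then analyze the pure $1$-dimensional torsion. The key geometric input is that the torsion must be assembled from \emph{rigid} pieces, and on $\bF_2$ the $(-2)$-curve $C$ is the only reduced irreducible curve carrying one: for a reduced irreducible $D\subset\bF_2$ and a line bundle $L$ on $D$ a local-to-global computation exhibits $H^1(\cO_D)\oplus H^0(N_{D/\bF_2})$ inside $\Hom_{\bF_2}(L,L[1])$, so rigidity forces $D\cong\bP^1$ with $D^2<0$, i.e.\ $D=C$. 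Hence $\cT^{-}$ is a vector bundle on $C\cong\bP^1$, so $\cT^{-}\cong\bigoplus_i\cO_C(a_i)$, and a further application of rigidity of $\cE$ — comparing the extension class of \pref{eq:TEF_sequence} with the groups among the $\cO_C(a_i)$ — should force all $a_i$ to equal a single $a$. The companion statement about $\cE|_C$, including the appearance of the consecutive integer $a+1$, I would extract by restricting \pref{eq:TEF_sequence} to $C$: since $\cF^{-}$ is locally free this yields an exact sequence $0\to\cO_C(a)^{\oplus m}\to\cE|_C\to\cF^{-}|_C\to 0$ of bundles on $\bP^1$, and rigidity of $\cE$ forces this restricted extension to be sufficiently nonsplit that $\cE|_C$ is balanced at degree $a+1$.

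For part \pref{it:F_is_an_exceptional_vector_bundle}, exceptionality of $\cF^{\pm}$ is immediate from part \pref{it:Excis_the_inverse_twist_of_F}: the twist functors $T_{\cO_C(a)}$ and $T'_{\cO_C(a+1)}$ are autoequivalences by Definition \pref{df:twist}, hence send the exceptional object $\cE$ to the exceptional objects $\cF^{-}$ and $\cF^{+}$. Both are torsion-free by construction, and I would upgrade this to locally free using rigidity: the inclusion $\cF\hookrightarrow\cF^{\vee\vee}$ into the reflexive, hence (as $\bF_2$ is a smooth surface) locally free, double dual has $0$-dimensional cokernel, and a nonzero such cokernel would contribute a nonzero class to $\Hom(\cF,\cF[1])=0$. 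Finally, the relation $\cF^{-}\cong\cF^{+}\otimes\cO(C)$ I would prove by a direct computation rather than through the twist functors: tensoring $0\to\cO(-C)\to\cO\to\cO_C\to 0$ with $\cE$ gives the four-term exact sequence
$$
0\to\operatorname{Tor}_1(\cO_C,\cE)\to\cE(-C)\xrightarrow{\ s\ }\cE\to\cE|_C\to 0,
$$
whose image is exactly $\cF^{+}=\ker(\cE\to\cE|_C)$; since $\cT^{-}$ is a sheaf on the reduced curve $C$, the section $s$ annihilates precisely $\cT^{-}\otimes\cO(-C)$, so $\cF^{+}\cong\cE(-C)/\bigl(\cT^{-}\otimes\cO(-C)\bigr)$, and tensoring by $\cO(C)$ together with $\cO(-C)\otimes\cO(C)\cong\cO$ yields $\cF^{+}\otimes\cO(C)\cong\cE/\cT^{-}=\cF^{-}$. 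Thus everything rests on the torsion description of the second paragraph, which is the hard part.
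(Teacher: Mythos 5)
Your overall strategy is the right one (and is essentially the paper's): everything reduces to showing that the torsion is $\cT^-\cong\cO_C(a)^{\oplus m}$ for a \emph{single} integer $a$. But precisely this claim is the technical heart of the paper, and the two sketches you offer for it do not close it. First, from ``$C$ is the only reduced irreducible curve carrying a rigid line bundle'' you conclude ``hence $\cT^-$ is a vector bundle on $C$''; this is a non sequitur, since a pure one-dimensional sheaf may have non-reduced scheme-theoretic support $mC$ with $m\ge 2$, and its stable pieces are a priori sheaves on $mC$, not line bundles on reduced curves. The paper needs \pref{lm:structure_of_pure_sheaves_supported_on_mC} for exactly this, and its proof rests on the nontrivial fact that a stable (hence simple) sheaf on $mC$ is an $\cO_C$-module (Ishii--Uehara). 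Second, once $\cT^-\cong\bigoplus_i\cO_C(a_i)$ is granted, you say rigidity considerations ``should force'' all $a_i$ equal. Rigidity alone cannot do this: since $\Ext^1_{\bF_2}(\cO_C(a),\cO_C(a+1))=\Ext^1_{\bF_2}(\cO_C(a+1),\cO_C(a))=0$, the sheaf $\cO_C(a)\oplus\cO_C(a+1)$ \emph{is} rigid. Ruling out several Harder--Narasimhan degrees is the paper's page-long Claim, which plays the lower bound ${}^1(\cO_C(a_{n-1}),\cF)\ge 2r_n$ against ${}^1(\cO_C(a_{n-1}),\cF)<{}^1(\cO_C(a_n),\cF)\le r_n$, and these inequalities require the full $\Ext$ table of \pref{lm:dimensions_of_ext_groups_in_TEF_sequence} together with Mukai's lemma applied to the intermediate quotients $\cE/\cT^{n-1}$. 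Your proposal contains no substitute for this argument; the same applies to your companion claim $\cE|_C\cong\cO_C(a+1)^{\oplus n}$, which is itself a substantial part of the theorem (it is the second bullet of part (1)) and for which ``rigidity of $\cE$ forces the extension to be sufficiently nonsplit'' is again only a hope --- the paper obtains it from the composition identity $T_{\cO_C(a-1)}\circ T_{\cO_C(a)}\cong(-)\otimes\cO(C)$ of Ishii--Uehara, an input absent from your plan.

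There is also a genuine error in the step you call ``comparatively formal.'' The triangle \pref{eq:twist_def} collapses to \pref{eq:the_inverse_twist_sequence} only if ${}^1(\cO_C(a),\cE)=0$, and your route to this --- sphericity of $\cO_C(a)$ kills the $\cT^-$-terms, plus splitting types --- fails: sphericity gives ${}^1(\cO_C(a),\cT^-)=0$ but also ${}^2(\cO_C(a),\cT^-)\cong k^{m}\ne 0$, so the long exact sequence only yields
\begin{equation*}
{}^1(\cO_C(a),\cE)\cong\ker\left({}^1(\cO_C(a),\cF^-)\to{}^2(\cO_C(a),\cT^-)\right),
\end{equation*}
and ${}^1(\cO_C(a),\cF^-)$ is \emph{never} zero: by the paper's table it equals ${}^1(\cT,\cF)/m=(t+f-1)/m=m$. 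Concretely, for the exceptional sheaf $\cE$ sitting in $0\to\cO_C\to\cE\to\cO(-C)\to 0$ (the sheaf $\cE_1$ of Section 3 built from $\cE_{-1}=\cO$) one has ${}^1(\cO_C,\cO(-C))\cong k\ne 0$, so the required vanishing holds only because the connecting map is injective --- and that injectivity is exactly the content of the entry ${}^1(\cT,\cE)=f-1$ of \pref{lm:dimensions_of_ext_groups_in_TEF_sequence} combined with $f=1$, i.e.\ the simplicity of $\cF^-$, a genuine consequence of exceptionality of $\cE$ that must be proved, not assumed. On the positive side, two pieces of your plan are sound once the structural claim is available: exceptionality of $\cF^{\pm}$ via the twist autoequivalences, and the identification $\cF^-\cong\cF^+\otimes\cO(C)$ by viewing $\cF^+$ as the image of multiplication by the section defining $C$ and $\ker(s)$ as $\cT^-\otimes\cO(-C)$; the latter is a clean, direct alternative to the paper's snake-lemma-plus-twist-composition route. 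But both are downstream of the unproven core.
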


\begin{remark}
\pref{cj:main_conjecture}
is not correct for Hirzebruch surfaces
$\bF_{n}$
with
$n\ge 3$.
In fact, since those surfaces have no non-standard autoequivalences by
\cite[Theorem 1]{MR3162236},
shifts of sheaves are always sent to shifts of sheaves under autoequivalences.
On the other hand, we can construct exceptional objects which are
genuine complexes as follows.
Let
$F, C$ be the classes of a fiber and the negative curve, respectively.
Then we easily see that
\begin{equation*}
\cO , \scL:= \cO((n-1)F+C)
\end{equation*}
is an exceptional pair.
The left mutation (see \cite{Bondal-Kapranov_Serre}) of the pair is included in the triangle
\begin{equation*}
L_{\cO}\scL \to \RHom(\cO, \scL)\otimes \cO \xrightarrow[]{\mathrm{evaluation}} \scL,
\end{equation*}
and we can easily check that
$\scH^{0}$ and
$\scH^{1}$
of the complex
$L_{\cO}\scL$
are nontrivial; to see this, note that
$\scL$
is not globally generated since
$\scL \cdot \cO(C)=-1<0$.

\end{remark}

For a del Pezzo surface
$
X
$,
any exceptional object of
$D(X)$
is, up to even shifts
$\bZ[2]$,
uniquely determined by its class in
$K_0(X)$ (see \cite[Corollary 2.5]{MR966982}).
This is not the case for
$\bF_2$ (see \pref{rm:action_of_double_spherical_twists}).
Therefore we can also pose the following finer problem.

\begin{problem}
For each exceptional object
$
\scEtilde
\in
D(\bF_2)
$,
classify all other exceptional objects
$\cE$
such that
$
[\scEtilde]=[\cE]
\in
K_0(\bF_2)
$.
\end{problem}

As a partial answer to the problem,
for any exceptional object
$
\scEtilde
$
on
$\bF_2$,
we describe all the exceptional sheaves
$\cE$
sharing the class with
$
\scEtilde
$.

\begin{theorem}[{\pref{cr:classification_of_equivalent_sheaves} $+$ \pref{pr:existence_of_sheaf_in_the_class}}]
For any exceptional object
$
\scEtilde
\in
D(\bF_2)
$,
there exists a unique exceptional vector bundle
$\cE$
such that
$
[\scEtilde]
=
[\cE]
$
or
$
-[\cE]
\in
K_0(\bF_2)
$.
Moreover the set of exceptional sheaves sharing the class with
$
\cE
$
can be explicitly described as
$
\{
\cE_i \ | \ 
i \ge -1
\}
$
with
$\cE_{-1}=\cE$
(see
\pref{sc:Exceptional_sheaves_sharing_classes}
for the definition of
$\cE_i$).

\end{theorem}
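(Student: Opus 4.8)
The plan is to establish the two assertions separately, matching the two cited results, and to reduce as much as possible to computations in $K_0(\bF_2)$. Since $\bF_2$ carries a full exceptional collection, $K_0(\bF_2) \cong \bZ^{\oplus 4}$, and every exceptional object $\scEtilde$ satisfies $\chi([\scEtilde], [\scEtilde]) = 1$. First I would write a class $v$ as a quadruple built from the rank $r$, the first Chern class in the Picard group $\bZ C \oplus \bZ F$, and an Euler-characteristic coordinate, record the Euler form $\chi(-,-)$ explicitly, and rewrite the exceptional condition $\chi(v, v) = 1$ as an explicit indefinite quadratic equation. Because the shift acts by $v \mapsto -v$ on $K_0(\bF_2)$, after replacing $\scEtilde$ by $\scEtilde[1]$ if necessary I may assume $r > 0$; one checks numerically that an exceptional class has $r \ne 0$, the rank-zero solutions of the quadratic equation being the spherical ones, for which $\chi(v, v) = 2$. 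This fixes the sign and reduces the first assertion to realizing and rigidifying positive-rank exceptional classes.

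For existence --- the content of \pref{pr:existence_of_sheaf_in_the_class} --- I would show that every positive-rank exceptional class is the class of an exceptional vector bundle, constructing the bundle from the classification of exceptional bundles on $\bF_2$: one obtains it by mutating a suitable exceptional pair of line bundles (equivalently, by applying a product of standard mutations to a line bundle) and verifies, using the action on $K_0(\bF_2)$ recorded in \pref{rm:spherical_twist_on_K_0}, that it lands in the prescribed class. For uniqueness I would use that an exceptional vector bundle is rigid and that two exceptional vector bundles on $\bF_2$ with the same class in $K_0(\bF_2)$ are isomorphic; the cleanest route is to bring every positive-rank exceptional class to a normal form under the reflection action on $K_0(\bF_2)$ and to track that the associated bundle is determined up to isomorphism at each reflection step. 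The residual sign ambiguity $\pm[\cE]$ is then precisely the freedom of an odd shift, and the requirement of positive rank singles out the representative $\cE$.

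The second assertion is \pref{cr:classification_of_equivalent_sheaves}, and here I would lean decisively on \pref{th:main_conjecture;the_case_of_sheaves}, which controls the structure of any exceptional sheaf with torsion. To generate the family I set $\cE_{-1} = \cE$ and pass from $\cE_i$ to $\cE_{i+1}$ by a spherical twist $T_{\cO_C(a)}$ along the line bundle on the $(-2)$-curve $C$ with $\chi(\cO_C(a), \cE_i) = 0$; by \pref{rm:spherical_twist_on_K_0} such a twist preserves the class, so that $[\cE_i] = [\cE]$ for all $i$, yet it is a genuinely nontrivial operation, since for a sheaf carrying torsion along $C$ one typically has both $\Hom(\cO_C(a), \cE_i)$ and $\operatorname{Ext}^1(\cO_C(a), \cE_i)$ nonzero. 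The point is that on one side of $\cE_{-1} = \cE$ these twists stay inside the abelian category of sheaves, producing $\cE_0, \cE_1, \dots$ with steadily growing torsion, whereas on the other side they leave it (producing genuine complexes), which is what forces the one-sided indexing $i \ge -1$ with the vector bundle $\cE$ sitting at the boundary.

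For the converse --- that these exhaust all exceptional sheaves sharing the class --- I take an arbitrary exceptional sheaf $\cE'$ with $[\cE'] = [\cE]$. If $\cE'$ is torsion-free it is an exceptional vector bundle in the class $[\cE]$, hence $\cE' \cong \cE = \cE_{-1}$ by the uniqueness proved above. If $\cE'$ has nontrivial torsion, \pref{th:main_conjecture;the_case_of_sheaves} presents it through the (inverse) twist sequences along $C$, whose torsion-free parts are exceptional vector bundles; I would then apply the class-preserving twist in the torsion-decreasing direction and induct on a nonnegative integer invariant of $\cE'$ to identify it with some $\cE_i$. The hard part will be exactly this exhaustiveness: one must produce a nonnegative integer invariant --- the natural candidates being the length of the torsion subsheaf or $-\chi(\cO_C(a), \cE')$ for the relevant $a$ --- that is bounded on the sheaves of the fixed class $[\cE]$ and is strictly decreased by the elementary twist supplied by \pref{th:main_conjecture;the_case_of_sheaves}, and one must check that the integer $a$ is forced at each step. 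Once such a monotone invariant is available, the reduction terminates at the unique vector bundle $\cE_{-1}$, and both the completeness of the list $\{\cE_i \mid i \ge -1\}$ and its one-sided indexing follow.
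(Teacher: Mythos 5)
Your proposal breaks down in both halves, and in both cases the missing ingredient is exactly the nontrivial content of the paper's argument. For the existence half (\pref{pr:existence_of_sheaf_in_the_class}), you reduce to realizing every positive-rank exceptional class by an exceptional vector bundle, ``constructing the bundle from the classification of exceptional bundles on $\bF_2$'' by mutations of line bundles. No such classification exists (understanding exceptional bundles on $\bF_2$ is what this paper is a first step toward), and the transitivity you implicitly need --- that the mutation/braid orbit of line-bundle classes reaches every exceptional class --- is not available on $\bF_2$ by elementary means; the paper can only prove a \emph{numerical} transitivity statement for full exceptional collections, and even that requires deformation. The paper's actual proof of \pref{pr:existence_of_sheaf_in_the_class} is geometric and cannot be replaced by lattice manipulations in $K_0(\bF_2)$: one deforms $\bF_2$ to $\bF_0$ over a complete discrete valuation ring, lifts the exceptional object uniquely to the family using deformation theory of perfect complexes (\pref{lm:huybrechts_thomas}, exploiting $\Ext^1=\Ext^2=0$), applies the del Pezzo classification on the generic fiber to get a shift of an exceptional bundle (\pref{cr:lifting_exceptional_objects_to_F_0}), and then specializes that bundle back to the special fiber $\bF_2$ through Langer's relative moduli space of stable sheaves (\pref{lm:Langer}), checking via deformation-invariance of $\chi$ and semicontinuity that the limit is an exceptional bundle in the same class. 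Realizing a numerical class by an actual bundle is the hard geometric step, not a formal consequence of $\chi(v,v)=1$. (Your uniqueness appeal is fine: it is \pref{lem:Gor_Kul}, proved \`a la Rudakov using $(-K)$-stability.)

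For the classification half there is a concrete numerical error and an admitted gap. The error: your elementary move ``twist by $\cO_C(a)$ where $\chi(\cO_C(a),\cE_i)=0$'' is in general impossible. In the notation of \pref{subsc:Excinotvec}, Riemann--Roch gives
\[
\chi\bigl(\cO_C(c),\cE_i\bigr)=(c-b_0+2)\rank\cE-s ,
\qquad 0<s\le \rank\cE ,
\]
which has an integer root $c$ only in the degenerate case $s=\rank\cE$; a single spherical twist never preserves the class otherwise. The class-preserving moves linking consecutive members of the family are the \emph{double} twists $T^2_{\cO_C(b_0+i)}$, which act trivially on $K_0(\bF_2)$ by \pref{rm:spherical_twist_on_K_0} (this is how \pref{cl:E_i_and_E_have_the_same_class} is proved). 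The gap: you correctly flag exhaustiveness as the hard part, but your plan --- an induction on an unspecified torsion invariant strictly decreased by an elementary twist --- is left entirely unconstructed. In fact no induction is needed: \pref{th:main_conjecture;the_case_of_sheaves} already writes any exceptional sheaf $\cE'$ with torsion, in a single step, as an inverse twist $T'_{\cO_C(a)}\cF$ of an exceptional vector bundle, and the bookkeeping of \pref{subsc:Excinotvec} then exhibits $\cE'$ as a definite member $\cE'_j$, $j\ge 0$, of the family generated by its own bundle $\cE'_{-1}$. If $[\cE']=[\cE]$, then $[\cE'_{-1}]=[\cE_{-1}]$, so $\cE'_{-1}\cong\cE_{-1}$ by \pref{lem:Gor_Kul}, and since the whole family is determined by the bundle and its restriction to $C$, the two families coincide and $\cE'\cong\cE_j$. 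That one-step argument is what replaces the monotone invariant you were seeking.
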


\subsection*{Acknowledgements}
The authors would like to thank Daniel Huybrechts for answering
questions about stability of sheaves.
They are also indebted to the referee for a very careful
reading of the paper and for his/her
helpful comments and corrections.
A part of this work was done during the stay of the authors
at the Max-Planck-Institut f\"ur Mathematik.
They are grateful for their hospitality and support.
S.~O. was partly supported by JSPS Grant-in-Aid for Young Scientists No.~25800017.
H.~U. was partly supported by Grant-in-Aid for Scientific Research No.~23340011.
\subsection*{Notations and Conventions}
We always work over an algebraically closed field
$
k
$
of an arbitrary characteristic.

To simplify the notations, we write
$
{}^i(\cE,\cF)
$
to indicate
$\Ext^i(\cE,\cF)=\Hom(\cE,\cF[i])$.
Even more,
${}^0(\cE, \cF)$
is also denoted by
$(\cE, \cF)$.
These symbols also indicate the dimensions of the respective vector spaces,
depending on the context.
The symbol
$
\chi ( - , -)
$
denotes the Euler pairing, which is defined by 
$
\chi(\cE,\cF)=\sum_{i\in \mathbb{Z}} (-1)^i\dim {}^i(\cE,\cF).
$

For each
$
n>0
$,
we denote by
$
C
\subset
\bF_n
$
the unique negative curve with
$
C^2
=
-n
$.
Exceptional objects on surfaces whose anti-canonical line bundle is big and
has at most zero-dimensional base locus were systematically
investigated in \cite{MR1604186}.
Since the anti-canonical line bundle of
$
\bF_2
$
is globally generated, we freely quote the results of \cite{MR1604186}
in this paper. In particular we use the notion of
$(-K_{\bF_2})$-stability for torsion-free sheaves on this surface.

\section{Proof of \pref{th:main_conjecture;the_case_of_sheaves}}

We give a proof of \pref{th:main_conjecture;the_case_of_sheaves} in this section.
For simplicity, we put $\cF=\cF^-$ and $\cT=\cT^-$.
Let us begin with some preparations.

\begin{lemma}\label{lm:dimensions_of_ext_groups_in_TEF_sequence}
For any sequence of the form \pref{eq:TEF_sequence} with $\cE$ exceptional,
the dimensions of the $\Ext$ groups among $\cT,\cE,\cF$ can be calculated as follows:

\begin{alignat*}{3}
{}^0(\cT, \cT)  &=  t &\quad\quad\quad {}^1(\cT, \cT) &=  0&\quad\quad {}^2(\cT, \cT) & =  t\\
{}^0(\cT, \cE)  &=  t & {}^1(\cT, \cE) &=  f-1& {}^2(\cT, \cE) &=  0\\
{}^0(\cT, \cF)  &= 0 & {}^1(\cT, \cF) &=  f+t-1& {}^2(\cT, \cF) &=  0\\
{}^0(\cE, \cT) &=  0& {}^1(\cE, \cT) &=  f-1& {}^2(\cE, \cT) &=  t\\
{}^0(\cE, \cE)  &=  1& {}^1(\cE, \cE) &=  0& {}^2(\cE, \cE) &=  0\\
{}^0(\cE, \cF)  &=  f& {}^1(\cE, \cF) &=  t& {}^2(\cE, \cF) &=  0\\
{}^0(\cF, \cT)  &=  0& {}^1(\cF, \cT) &=  f+t-1& {}^2(\cF, \cT) &=  0\\
{}^0(\cF, \cE)  &=  0& {}^1(\cF, \cE) &=  t-1& {}^2(\cF, \cE) &=  f-1\\
{}^0(\cF, \cF)  &=  f& {}^1(\cF, \cF) &=  0& {}^2(\cF, \cF) &=  f-1
\end{alignat*}
Above we set $t={}^0(\cT, \cT)$ and $f={}^0(\cF, \cF)$.
\end{lemma}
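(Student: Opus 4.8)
The plan is to treat the short exact sequence \pref{eq:TEF_sequence} as the only structural input and to read off the whole table by feeding a short list of ``seed'' values into the six long exact sequences obtained by applying $\RHom(-,-)$ in each of the two variables to \pref{eq:TEF_sequence}. Serre duality on $\bF_2$ and additivity of $\chi$ along $[\cE]=[\cT]+[\cF]$ will be used throughout, the former to halve the bookkeeping and the latter as a consistency check. So first I would assemble the seeds, then propagate, and only at the end confront the one genuinely non-formal point.

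For the seeds: exceptionality gives the $(\cE,\cE)$ row $1,0,0$ directly. Next, $\cE$ is simple, i.e.\ $\Hom(\cE,\cE)=k$, and I would combine this with the canonical torsion/torsion-free splitting to kill the four ``wrong-direction'' degree-$0$ groups ${}^0(\cT,\cF)={}^0(\cF,\cT)={}^0(\cE,\cT)={}^0(\cF,\cE)=0$: the first vanishes because the image of a map from a torsion sheaf to a torsion-free one is both torsion and torsion-free, while for each of the other three a nonzero map composes with $\cT\hookrightarrow\cE$ and $\cE\twoheadrightarrow\cF$ into an endomorphism of $\cE$ that either has torsion image or annihilates $\cT\ne0$, hence is non-invertible and non-scalar, contradicting simplicity. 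Finally, since $\cT$ is (scheme-theoretically) supported on the $(-2)$-curve $C$, on which $\omega_{\bF_2}$ restricts trivially, we have $\cT\otimes\omega_{\bF_2}\cong\cT$, so Serre duality becomes the self-duality ${}^i(\cT,-)\cong{}^{2-i}(-,\cT)^\vee$; this at once produces the reversal symmetry exchanging the three $\cT$-rows with the three $\cT$-columns and gives ${}^2(\cT,\cT)=t$.

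Feeding these seeds into the six long exact sequences then determines every entry in terms of $t$, $f$ and the two self-extension dimensions ${}^1(\cT,\cT)$ and ${}^1(\cF,\cF)$; for instance the sequences for $\Hom(\cE,-)$ and $\Hom(-,\cF)$ force ${}^0(\cE,\cF)={}^0(\cF,\cF)=f$ and hence ${}^1(\cE,\cT)={}^1(\cT,\cE)=f-1$, while $\Hom(-,\cE)$ gives ${}^1(\cF,\cE)=t-1$. I expect $\chi(\cE,\cE)=1$ to come out automatically for all values of these two parameters, so it provides no further constraint. Thus the table is correct exactly once one shows ${}^1(\cT,\cT)=0$ and ${}^1(\cF,\cF)=0$.

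The hard part will be precisely these two rigidity statements, which none of the above formal machinery can supply: already $\cO_C(a)\oplus\cO_C(a+2)$ is a torsion sheaf on $C$ with nonzero ${}^1$, so rigidity genuinely restricts which $\cO_C$-twists may occur in $\cT$ and forces $\cF$ to be an unobstructed torsion-free sheaf. I would deduce both from the rigidity ${}^1(\cE,\cE)=0$ of $\cE$ together with the geometry of $\bF_2$: the torsion of a rigid sheaf can sit neither on a moving curve nor on points, its twists along $C$ cannot spread apart, and the torsion-free quotient inherits the rigidity of $\cE$. Making this rigidity transfer precise---via the connecting maps of the torsion filtration, or by quoting the classification of rigid sheaves on weak del Pezzo surfaces---is where the real work lies and is the main obstacle.
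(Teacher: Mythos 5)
Your formal bookkeeping is essentially the paper's own: the proof there consists of exactly the six long exact sequences you describe, seeded by the exceptionality of $\cE$, the vanishing $(\cT,\cF)=0$, the isomorphism $\cT\otimes\omega_{\bF_2}\cong\cT$, and the rigidity of $\cT$ and $\cF$; your simplicity arguments for the degree-zero vanishings are correct (and slightly more self-contained than the paper, which recovers those vanishings from the sequences themselves). You also correctly isolate the two non-formal inputs. The problem is that you prove neither of them, and these are precisely the content of the lemma. For the rigidity ${}^1(\cT,\cT)={}^1(\cF,\cF)=0$ you explicitly defer, offering only ``connecting maps of the torsion filtration'' or an unspecified classification as possible routes. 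The paper closes this in one step: Mukai's lemma (\cite[Lemma 2.1.4. 2. (a)]{MR1604186}) applied to the sequence \pref{eq:TEF_sequence} says that for a short exact sequence $0\to A\to B\to C\to 0$ with $\Hom(A,C)=0$, rigidity of $B$ forces rigidity of $A$ and $C$; here $\Hom(\cT,\cF)=0$ is the easy torsion-into-torsion-free vanishing you already have, and $\cE$ is rigid. So the ``main obstacle'' you name is a single standard lemma, but without invoking it (or reproving it) your argument does not close.

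The second gap is more insidious because you present it as if it were obvious: you justify $\cT\otimes\omega_{\bF_2}\cong\cT$ by asserting that $\cT$ is \emph{scheme-theoretically} supported on the $(-2)$-curve $C$. At this point in the argument nothing rules out torsion supported on other curves, at points, or on a thickening $mC$ (where triviality of $\omega_{\bF_2}|_C$ does not literally apply). The support statement is itself a consequence of rigidity of $\cT$ --- which, per the previous paragraph, you have not yet established --- together with the geometry of $\bF_2$; indeed, in the paper this is exactly what the citation \cite[Lemma 2.2.5]{MR1604186} provides, namely $\cT\otimes\omega_{\bF_2}\cong\cT$ for the torsion part of a rigid sheaf on a surface whose anticanonical class is nef and big with no base components. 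As written, your treatment of this point is circular: you use the conclusion (constrained support) to derive the input ($\omega$-invariance) that the paper uses to constrain the support.
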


\begin{proof}
Consider the long exact sequences associated to the six functors
\[
\begin{split}
(\cE,-), (-, \cE), (\cF, -), (-, \cF), (\cT, -), (-, \cT)
\end{split}
\]
applied to the short exact sequence \pref{eq:TEF_sequence}, and
use the facts that
	\begin{itemize}
	\item $\cE$ is exceptional,
	\item $(\cT, \cF)=0$,
	\item $\cT\otimes \omega_{\bF_2}\cong \cT$ ($=$\cite[Lemma 2.2.5]{MR1604186}), and
	\item $\cF$ and $\cT$ are rigid (this follows from
	Mukai's lemma \cite[Lemma 2.1.4. 2. (a)]{MR1604186}
	and the rigidity of $\cE$).
	\end{itemize}
	\end{proof}

\begin{lemma}\label{lm:TEF_Lemma}
For any sequence of the form \pref{eq:TEF_sequence},
the sheaf $\cF$ is an exceptional vector bundle
and $\cT$ is  a pure one-dimensional sheaf
supported on $C$. 
\end{lemma}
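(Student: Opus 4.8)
The plan is to analyse the two pieces of \pref{eq:TEF_sequence} separately, feeding the vanishings recorded in \pref{lm:dimensions_of_ext_groups_in_TEF_sequence} --- in particular ${}^1(\cT,\cT)=0$ and ${}^1(\cF,\cF)=0$ --- together with the isomorphism $\cT\otimes\omega_{\bF_2}\cong\cT$ into the geometry of $\bF_2$.

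I would first determine $\cT$. Since $\cT$ is a nonzero torsion sheaf, its support has dimension $0$ or $1$. For a nonzero $0$-dimensional sheaf $G$ one has $\chi(G,G)=0$, while Serre duality gives ${}^2(G,G)=(G,G\otimes\omega_{\bF_2})=(G,G)$; hence ${}^1(G,G)=2(G,G)>0$ and no such $G$ is rigid. As $\cT$ is rigid, its support is therefore $1$-dimensional and $c_1(\cT)$ is a nonzero effective class. To locate it I use $\cT\otimes\omega_{\bF_2}\cong\cT$, which yields $\chi(L,\cT)=\chi(L\otimes\omega_{\bF_2}^{-1},\cT)$ for every line bundle $L$; expanding both sides by Riemann--Roch, namely $\chi(L,\cT)=\mathrm{ch}_2(\cT)-c_1(L)\cdot c_1(\cT)-\frac{1}{2}K_{\bF_2}\cdot c_1(\cT)$, and comparing forces $K_{\bF_2}\cdot c_1(\cT)=0$. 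Because $-K_{\bF_2}$ is nef, every irreducible component $D$ of the support then satisfies $-K_{\bF_2}\cdot D=0$; a one-line intersection computation shows that $-K_{\bF_2}\cdot D$ vanishes, among irreducible curves, exactly when $D=C$, so $\cT$ is supported on $C$. Finally $\cT$ is pure, since a nonzero $0$-dimensional subsheaf would again contribute to ${}^1(\cT,\cT)$ and violate rigidity.

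Next I would show that $\cF$ is an exceptional vector bundle. Local freeness follows from rigidity through the double dual: in $0\to\cF\to\cF^{\vee\vee}\to Q\to 0$ with $\cF^{\vee\vee}$ locally free and $Q$ of finite length, a standard comparison shows that a nonzero $Q$ produces a nonzero class in ${}^1(\cF,\cF)$, so rigidity forces $Q=0$ and $\cF=\cF^{\vee\vee}$. It then suffices to prove that $\cF$ is simple, i.e.\ that $f=(\cF,\cF)=1$: granting this, \pref{lm:dimensions_of_ext_groups_in_TEF_sequence} gives ${}^1(\cF,\cF)=0$ and ${}^2(\cF,\cF)=f-1=0$, so $\cF$ is exceptional. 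For simplicity I would pass through $(-K_{\bF_2})$-stability: by Mukai's lemma \cite[Lemma 2.1.4]{MR1604186} a rigid torsion-free sheaf on $\bF_2$ is $(-K_{\bF_2})$-semistable, and a rigid semistable sheaf is in fact stable, since a proper Jordan--H\"older factor of the same slope would once more force ${}^1(\cF,\cF)>0$. A stable sheaf is simple, whence $f=1$.

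The main obstacle is precisely this simplicity step. Rigidity by itself is insufficient --- $\cO_{\bF_2}^{\oplus 2}$ is rigid but has four-dimensional endomorphisms --- so the argument genuinely requires the $(-K_{\bF_2})$-stability theory of \cite{MR1604186} to upgrade rigidity to stability. The remaining delicate points, purity of $\cT$ and local freeness of $\cF$, rest on the same mechanism in disguise: a $0$-dimensional sub- or quotient sheaf always feeds, via Serre duality, a nonzero element into the relevant $\Ext^1$, which the rigidity of $\cT$ and $\cF$ forbids; making this bookkeeping precise is where the technical care lies.
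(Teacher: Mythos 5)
Your analysis of $\cT$ is correct, and is actually more self-contained than the paper's treatment: the paper simply cites \cite[Corollary 2.2.3]{MR1604186} for purity and one-dimensionality, whereas your Riemann--Roch computation forcing $K_{\bF_2}\cdot c_1(\cT)=0$ from $\cT\otimes\omega_{\bF_2}\cong\cT$, combined with nefness of $-K_{\bF_2}$, is a nice explicit way to pin the support to $C$ (purity via the maximal zero-dimensional subsheaf and Mukai's lemma also goes through). Likewise, local freeness of $\cF$ via the double dual is exactly what the paper quotes as \cite[Lemma 2.2.1]{MR1604186}. The genuine gap is in your simplicity step, and it sits in both implications you rely on. First, ``a rigid torsion-free sheaf on $\bF_2$ is $(-K_{\bF_2})$-semistable'' is not Mukai's lemma (\cite[Lemma 2.1.4]{MR1604186} is the statement that rigidity passes to the sub and quotient in a short exact sequence with vanishing $\Hom$), and it is false: $\cO\oplus\cO(F)$ is rigid and torsion-free, but its summands have different $(-K_{\bF_2})$-slopes, so it is not $\mu$-semistable. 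Second, ``a rigid semistable sheaf is stable'' is refuted by the very example you quote, $\cO_{\bF_2}^{\oplus 2}$: it is rigid and $\mu$-semistable but not stable, and its Jordan--H\"older factors $\cO,\cO$ produce no class in $\Ext^1$ because the extension splits --- which is exactly why ``a proper JH factor of the same slope forces ${}^1(\cF,\cF)>0$'' fails.

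What the stability theory of \cite{MR1604186} actually provides, and what the paper uses, is the decomposition theorem \cite[Theorem 2.4.1.(1)]{MR1604186}: a rigid bundle is a direct sum $\cF=\bigoplus_{i=1}^m\cF_i$ of $\mu_{(-K_{\bF_2})}$-semistable rigid bundles. Rigidity alone can never rule out $m>1$ (again $\cO^{\oplus 2}$), so the extra input must come from $\cE$ being exceptional, namely the relation ${}^0(\cF,\cF)-{}^2(\cF,\cF)=1$ of \pref{lm:dimensions_of_ext_groups_in_TEF_sequence}; together with \cite[Lemma 2.2.2]{MR1604186}, an inequality comparing $(\cF,\cF)$ and ${}^2(\cF,\cF)$ for decomposable rigid bundles, it excludes $m>1$. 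Once $m=1$, semistability plus the slope drop $\mu_{(-K_{\bF_2})}(\cF\otimes\omega_{\bF_2})<\mu_{(-K_{\bF_2})}(\cF)$ gives ${}^2(\cF,\cF)=(\cF,\cF\otimes\omega_{\bF_2})=0$, and the same relation then yields $(\cF,\cF)=1$. In short, you invoke the table of \pref{lm:dimensions_of_ext_groups_in_TEF_sequence} only \emph{after} assuming simplicity, but that table (specifically the constraint ${}^0-{}^2=1$, which distinguishes $\cF$ from sheaves like $\cO^{\oplus 2}$) is precisely the ingredient that makes simplicity provable; without it, your chain of implications has no valid starting point.
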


\begin{proof}
First of all, \cite[Corollary 2.2.3]{MR1604186} implies that the torsion-free part $\cF$
and the torsion part $\cT$ are both rigid, and that $\cT$ is a pure one-dimensional sheaf.
Then \cite[Lemma 2.2.1]{MR1604186} tells us that $\cF$ is a vector bundle.

Next, by \cite[Theorem 2.4.1. (1)]{MR1604186} we see that
$\cF$ is a direct sum
\[\cF=\bigoplus_{i=1}^{m}\cF_i\]
of $\mu_{(-K_{\bF_2})}$-semi-stable rigid bundles.
If we assume $m>1$, from \cite[Lemma 2.2.2]{MR1604186}
we obtain the inequality $(\cF,\cF)\ge {}^2(\cF,\cF)-2$.
Since \pref{lm:dimensions_of_ext_groups_in_TEF_sequence}
implies $(\cF,\cF)-{}^2(\cF,\cF)=1$, this is a contradiction.

Hence we see that
$m=1$, so that
$\cF$ itself is semi-stable.
This implies
$
{}^2(\cF,\cF)
=
(\cF, \cF\otimes\omega_{\bF_2})
=
0
$,
because of the inequality
\[
\mu_{(-K_{\bF_2})}(\cF)>\mu_{(-K_{\bF_2})}(\cF\otimes\omega_{\bF_2})
\]
and
\cite[Lemma 1.1.5]{MR1604186}.
Therefore we see ${}^2(\cF, \cF)=(\cF, \cF)-1=0$, concluding that
$
\cF
$
is an exceptional vector bundle.
\end{proof}

\begin{remark}
The above proof in particular tells us that the number $f=(\cF, \cF)$ in
\pref{lm:dimensions_of_ext_groups_in_TEF_sequence} is one.
\end{remark}

For a positive integer $m$, let $\iota\colon C\hookrightarrow mC$ be the
natural closed immersion into the
$m$-th thickening of
$C$.
We next consider the Harder-Narasimhan filtrations of pure sheaves on
$mC$.

\begin{lemma}\label{lm:structure_of_pure_sheaves_supported_on_mC}
Let $\cT$ be a pure one-dimensional sheaf on the scheme $mC$.
Then the subquotients of the Harder-Narasimhan filtration
\begin{equation*}\label{eq:Harder-Narasimhan_filtration}
0
=
\cT^0
\subset
\cT^1
\subset
\cdots
\subset
\cT^n
=
\cT
\end{equation*}
 of $\cT$ are of the form
\[
\lc
\cT^1 / \cT^0 ,
\cT^2 / \cT^1 ,
\dots ,
\cT^n / \cT^{n-1}
\rc
=
\lc
\cO_C(a_1)^{\oplus r_1}, \cO_C(a_2)^{\oplus r_2}, \dots, \cO_C(a_n)^{\oplus r_n}
\rc
\]
with $a_1>a_2>\cdots>a_n$ and $r_i>0$.
\end{lemma}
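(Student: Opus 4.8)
The plan is to reduce the statement to the classification of \emph{semistable} pure one-dimensional sheaves on $mC$ and to prove that every such sheaf is isomorphic to $\cO_C(a)^{\oplus r}$ for some $a\in\bZ$ and $r>0$. For sheaves (set-theoretically) supported on $C$ the relevant slope is $\mu(\cT)=\chi(\cT)/\rho(\cT)$, where $\rho(\cT)$ is the length of $\cT$ at the generic point of $C$; in particular $\mu(\cO_C(a)^{\oplus r})=a+1$, and the Harder--Narasimhan filtration orders its semistable subquotients by strictly decreasing $\mu$. Granting the classification, each subquotient $\cT^i/\cT^{i-1}$ is semistable, hence of the asserted form $\cO_C(a_i)^{\oplus r_i}$ with $r_i>0$, and the strict decrease of the Harder--Narasimhan slopes becomes exactly $a_1>a_2>\cdots>a_n$. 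Thus the whole content is the classification of the semistable pieces.

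To set it up, write $\mathcal{J}\subset\cO_{mC}$ for the ideal sheaf of the reduced curve $C$ inside $mC$, so that $\mathcal{J}^m=0$; since $C$ is a smooth Cartier divisor with $C^2=-2$, the conormal bundle is $\mathcal{J}/\mathcal{J}^2=\cO_C(-C)|_C$ of degree $-C^2=2$, and hence $\mathcal{J}^k/\mathcal{J}^{k+1}\cong\cO_C(2k)$ for $0\le k\le m-1$. The easy case is $\mathcal{J}\cT=0$: then $\cT$ is an $\cO_C$-module, pure of dimension one, hence a torsion-free sheaf on $C\cong\bP^1$, i.e.\ a direct sum of line bundles, and semistability forces a common degree, giving $\cT\cong\cO_C(a)^{\oplus r}$. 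The crux is therefore to rule out any ``genuinely thickened'' semistable sheaf, i.e.\ to show $\mathcal{J}\cT=0$.

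So assume $\cT$ is semistable with $\mathcal{J}\cT\ne0$, and let $k\ge1$ be the largest integer with $\mathcal{J}^k\cT\ne0$. The bottom layer $B:=\mathcal{J}^k\cT$ is annihilated by $\mathcal{J}$, hence is an $\cO_C$-submodule of $\cT$; being a subsheaf of the pure sheaf $\cT$, it is torsion-free on $\bP^1$, thus a vector bundle. The associated-graded multiplication gives a surjection $\cO_C(2k)\otimes_{\cO_C}(\cT/\mathcal{J}\cT)\twoheadrightarrow\mathcal{J}^k\cT/\mathcal{J}^{k+1}\cT=B$, and since $B$ is torsion-free this factors through the locally free quotient $\overline{Q}:=(\cT/\mathcal{J}\cT)/(\text{torsion})$, which is nonzero because $\cT$ has positive rank at the generic point of $C$. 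I then compare Harder--Narasimhan slopes on $\bP^1$: the surjection $\cO_C(2k)\otimes\overline{Q}\twoheadrightarrow B$ yields $\mu_{\min}(B)\ge 2k+\mu_{\min}(\overline{Q})$, while semistability of $\cT$, applied to the subsheaf $B$ and to the quotient $\overline{Q}$ of $\cT$, gives $\mu_{\max}(B)\le\mu(\cT)\le\mu_{\min}(\overline{Q})$. Combining, $\mu(\cT)\ge\mu_{\max}(B)\ge\mu_{\min}(B)\ge 2k+\mu_{\min}(\overline{Q})\ge 2k+\mu(\cT)$, whence $2k\le0$, contradicting $k\ge1$. Therefore $\mathcal{J}\cT=0$ and we are back in the easy case.

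The main obstacle is exactly this last step: exhibiting a destabilizing subsheaf inside any thickened sheaf. The mechanism is the positivity of the conormal bundle $\cO_C(2)$ (equivalently $C^2<0$): passing one step deeper into the thickening raises degrees by $2$, so the deepest layer $B$ is forced to have strictly larger slope than the top quotient, which is incompatible with $B$ being a subsheaf of the semistable $\cT$. The only genuine bookkeeping is that $\cT/\mathcal{J}\cT$ may carry a zero-dimensional torsion subsheaf (handled by passing to $\overline{Q}$), and that the slopes $\mu_{\min},\mu_{\max}$ computed on $C$ agree with those of the corresponding pure sheaves on $mC$ (immediate, as each $\cO_C(c)$ has $\rho=1$); both points are routine.
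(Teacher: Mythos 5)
Your proof is correct, but it takes a genuinely different route from the paper's. The paper argues via \emph{stable} sheaves first: a stable sheaf on $mC$ is simple by \cite[Corollary 1.2.8]{Huybrechts-Lehn}, hence by \cite[Lemma 4.8]{Ishii-Uehara_ADC} it is the pushforward of a sheaf on the reduced curve $C$, hence a line bundle $\cO_C(a)$; it then upgrades to semistable sheaves via the Jordan--H\"older filtration together with the Serre-duality computation $\Ext^1_{mC}(\iota_*\cO_C(a),\iota_*\cO_C(a))\cong\Hom_C(\cO_C(a),\cO_C(a-2m))^\vee=0$, which shows every semistable sheaf is polystable, i.e.\ of the form $\cO_C(a)^{\oplus r}$. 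You instead prove directly that every \emph{semistable} pure sheaf is annihilated by the ideal $\mathcal{J}$ of $C$ in $mC$: the surjection $\cO_C(2k)\otimes\overline{Q}\twoheadrightarrow\mathcal{J}^k\cT$ coming from the associated graded of the $\mathcal{J}$-adic filtration (with the positivity of the conormal bundle supplying the shift $2k$) is played against the slope bounds that semistability imposes on the subsheaf $\mathcal{J}^k\cT$ and the quotient $\overline{Q}$, forcing $2k\le 0$; the rest is Grothendieck splitting on $\bP^1$. The paper's route buys brevity by outsourcing the hard content to the Ishii--Uehara lemma (simple sheaves on thickenings descend to $C$) and to duality on the non-reduced scheme $mC$. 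Your route buys self-containedness: it needs neither input, it handles semistable sheaves in one stroke without passing through stable objects and Jordan--H\"older factors, and it isolates the geometric mechanism --- $C^2<0$, equivalently the positive degree of the conormal bundle --- so the same argument applies verbatim to thickenings of any negative rational curve. The bookkeeping points you flag (the uniform shift between $\chi/\rho$ and the $\bP^1$-slope, and the factorization through the torsion-free quotient $\overline{Q}$ because the image of a zero-dimensional sheaf in a pure sheaf vanishes) are indeed routine and handled correctly.
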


\begin{proof}
A stable sheaf $S$ on $mC$ is simple \cite[Corollary 1.2.8]{Huybrechts-Lehn},
and thus is isomorphic to a coherent $\cO_C$-module by \cite[Lemma 4.8]{Ishii-Uehara_ADC}.
Since $C$ is isomorphic to $\bP^1$ and $S$ is stable, $S$ has to be a line bundle on $C$.

Recall that any semi-stable sheaf has a Jordan-H\"older filtration
\cite[Section 1.5]{Huybrechts-Lehn}.
By definition, its subquotients are stable with the same slope.
On the other hand, for any
$
a
$
and
$
m > 0
$
we have
\[
\begin{array}{ll}
& \Ext^1_{mC} ( \iota_* \cO_C(a) , \iota_* \cO_C(a) ) \\
\cong &
\Hom_{mC} ( \iota_* \cO_C(a) , \iota_* \cO_C(a) \otimes \omega_{mC} )^{\vee}
\\
\cong & 
\Hom_C(\cO_C(a), \cO_C(a-2m))^{\vee}
=
0.
\end{array}
\]
Therefore any semi-stable sheaf on
$
mC
$ turns out to be polystable;
i.e., isomorphic to the direct sum of its Jordan-H\"older factors.
Thus we conclude the proof.
\end{proof}

Now let $\cT$ be the torsion sheaf in \pref{eq:TEF_sequence}.
Since it is pure by \pref{lm:TEF_Lemma}, we can apply \pref{lm:structure_of_pure_sheaves_supported_on_mC}.
Then we can prove the following
\begin{claim}
The number $n$ of the Harder--Narasimhan factors of $\cT$ is $1$, so that
$\cT\cong\cO_C(a)^{\oplus r}$ holds for some integers
$a$ and $r>0$.
\end{claim}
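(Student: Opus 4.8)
The plan is to use that $\cT$ is not merely rigid but embeds into the exceptional sheaf $\cE$: rigidity alone is too weak, since e.g.\ $\cO_C(a)^{\oplus p}\oplus\cO_C(a-1)^{\oplus q}$ is a rigid pure one-dimensional sheaf on $C$ with two Harder--Narasimhan factors. Throughout I write the subquotients of the filtration as $\cT^i/\cT^{i-1}\cong\cO_C(a_i)^{\oplus r_i}$, and I abbreviate $r=\sum_i r_i$, $s=\operatorname{rk}\cF$ and $D=\deg(\cF\vert_C)$; recall also that $f=(\cF,\cF)=1$.

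The first step is the vanishing $\Hom(\cE,\cO_C(b))=0$ for every $b\le a_1$. This holds because $\cO_C(b)$ embeds into $\cO_C(a_1)\hookrightarrow\cT^1\hookrightarrow\cT$, so left exactness of $\Hom(\cE,-)$ together with $\Hom(\cE,\cT)={}^0(\cE,\cT)=0$ forces the claim. By Serre duality and $\cO_C(a)\otimes\omega_{\bF_2}\cong\cO_C(a)$ this gives $\Ext^2(\cO_C(a_i)^{\oplus r_i},\cE)\cong\Hom(\cE,\cO_C(a_i)^{\oplus r_i})^{\vee}=0$ for all $i$. Starting from ${}^1(\cT,\cE)=f-1=0$ and ${}^2(\cT,\cE)=0$ and running the long exact sequences of $\Hom(-,\cE)$ down the filtration $\cT^n\supset\cdots\supset\cT^1$, a downward induction then yields $\Ext^1(\cT^k,\cE)=\Ext^2(\cT^k,\cE)=0$ for all $k$. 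In particular $\RHom(\cT^1,\cE)$ is concentrated in degree $0$, and since any map $\cT^1\to\cE$ factors through the torsion part $\cT$ and then, by the slope inequality, through $\cT^1$ itself, we get $\chi(\cT^1,\cE)=\Hom(\cT^1,\cE)=\Hom(\cT^1,\cT^1)=r_1^2$.

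The final step is numerical. A Riemann--Roch computation gives $\chi(\cO_C(b),\cO_C(a))=2$ and $\chi(\cO_C(b),\cF)=s(b+1)-D$, hence $\chi(\cO_C(b),\cE)=2r+s(b+1)-D$. Applying this to $\cT^1$ and using $\chi(\cT^1,\cE)=r_1^2$ gives $D=2r+s(a_1+1)-r_1$; applying it to all of $\cT$ and using $\chi(\cT,\cE)={}^0(\cT,\cE)=t=r^2$ (the last equality from $\chi(\cT,\cT)=2t=2r^2$) gives $rD=r^2+sr+s\sum_i r_ia_i$. Eliminating $D$ collapses everything to
\[
r(r-r_1)=-\,s\sum_{i\ge 2}r_i\,(a_1-a_i).
\]
Here the left side is $\ge 0$ while the right side is $\le 0$ because $a_1>a_i$ for $i\ge 2$; hence both vanish and $\sum_{i\ge 2}r_i=0$, i.e.\ $n=1$. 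Once $n=1$ the sheaf $\cT$ is semistable, hence polystable by the argument in \pref{lm:structure_of_pure_sheaves_supported_on_mC}, so $\cT\cong\cO_C(a)^{\oplus r}$.

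The main obstacle is the middle step: rigidity of $\cT$ is genuinely insufficient, and the real work is to upgrade the single vanishing $\Hom(\cE,\cO_C(b))=0$ $(b\le a_1)$ into the statement that $\RHom(\cT^1,\cE)$ lives in degree $0$. This is exactly what rigidifies the otherwise under-determined Riemann--Roch bookkeeping, turning a mere inequality into the exact cancellation above that pins down $n=1$.
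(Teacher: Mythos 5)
Your proof is correct, but it takes a genuinely different route from the paper's. The paper argues by contradiction using the two \emph{bottom} Harder--Narasimhan factors: applying $(\cO_C(a_{n-1}),-)$ and $(\cO_C(a_n),-)$ to the quotient sequences $0\to\cT/\cT^{n-1}\to\cE/\cT^{n-1}\to\cF\to 0$, it assembles the chain of inequalities $2r_n\le{}^1(\cO_C(a_{n-1}),\cF)<{}^1(\cO_C(a_n),\cF)\le r_n$, whose crucial last step rests on the vanishing ${}^1(\cE/\cT^{n-1},\cT/\cT^{n-1})=0$, proved via Mukai's lemma applied to $0\to\cT^{n-1}\to\cE\to\cE/\cT^{n-1}\to0$ together with the computation $(\cE/\cT^{n-1},\cF)=1$. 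You instead work at the \emph{top} factor $\cT^1$: the downward d\'evissage starting from ${}^1(\cT,\cE)={}^2(\cT,\cE)=0$, combined with Serre duality (using $\omega_{\bF_2}|_C\cong\cO_C$) and $(\cE,\cT)=0$, shows $\RHom(\cT^1,\cE)$ is concentrated in degree $0$; the slope argument gives $\Hom(\cT^1,\cE)=\Hom(\cT^1,\cT^1)=r_1^2$; and Riemann--Roch (I checked $\chi(\cO_C(b),\cO_C(a))=2$, $\chi(\cO_C(b),\cF)=s(b+1)-D$, and hence $t=r^2$) converts everything into the exact identity $r(r-r_1)=-s\sum_{i\ge2}r_i(a_1-a_i)$, which is killed by the opposite signs of its two sides. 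All the individual steps are sound --- the induction down the filtration, the Hom computation, the two $\chi$ formulas, the elimination of $D$ --- and the one tacit positivity you need, $s=\rank\cF\ge1$, is guaranteed because $f=(\cF,\cF)=1$ forces $\cF\ne0$. What your approach buys: it avoids Mukai's lemma and the auxiliary quotients $\cE/\cT^{k}$ altogether, replacing the paper's delicate inequality chain by a single equality-plus-sign argument; the price is the Chern-character bookkeeping, which the paper's purely homological proof never needs. Both arguments consume the same inputs from the surrounding text, namely the Ext table of \pref{lm:dimensions_of_ext_groups_in_TEF_sequence}, the fact $f=1$, and \pref{lm:structure_of_pure_sheaves_supported_on_mC}.
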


\begin{proof}
Suppose for a contradiction that $n>1$.
Consider the short exact sequence
\begin{equation*}
0\to \cT^n/\cT^{n-1}\to \cE/\cT^{n-1}\to \cE/\cT^n=\cF\to 0
\end{equation*}
and apply the functor $(\cO_C(a_{n-1}), -)$ to it.
As part of the associated long exact sequence we obtain
\[
 {}^1( \cO_C ( a_{n-1} ), \cF )
 \xrightarrow[]{\delta}
 {}^2( \cO_C ( a_{n-1} ), \cT^n / \cT^{n-1} )
 \to
 {}^2( \cO_C ( a_{n-1} ), \cE / \cT^{n-1} ).
\]
The third term is trivial. In fact we have the series of inequalities
\[
\begin{array}{llr}
& {}^2(\cO_C(a_{n-1}), \cE/\cT^{n-1})^{\vee} & \\
= &
(\cE/\cT^{n-1}, \cO_C(a_{n-1})) & \mathrm{(Serre \ duality)} \\
\le &
(\cE/\cT^{n-1}, \cO_C(a_1)) & (a_{n-1} < a_1 ) \\
\le &
(\cE/\cT^{n-1}, \cT) & ( \cO_C ( a_1 ) \hookrightarrow \cT ) \\
\le &
(\cE, \cT) & ( \cE \twoheadrightarrow \cE / \cT^{n-1} ) \\
= &
0 & ( \mathrm{ \pref{lm:dimensions_of_ext_groups_in_TEF_sequence} } ).
\end{array}
\]
Therefore
$ \delta $ is surjective and we obtain the inequalities
\begin{equation}\label{eq:lower_bound}
{}^1(\cO_C(a_{n-1}), \cF)\ge {}^2(\cO_C(a_{n-1}), \cT^n/\cT^{n-1})=r_n(a_{n-1}-a_n+1)
\ge 2r_n.
\end{equation}

On the other hand, we can prove the two inequalities
\begin{equation}\label{eq:1(OC(an-1),F)<1(OC(an),F)}
{}^1(\cO_C(a_{n-1}), \cF)< {}^1(\cO_C(a_n), \cF)
\end{equation}
and
\begin{equation}\label{eq:1(OC(an),F)>=rn}
{}^1(\cO_C(a_n), \cF) \le r_n,
\end{equation}
to obtain a contradiction.

Let us show first the inequality
\pref{eq:1(OC(an-1),F)<1(OC(an),F)}.
Due to the Serre dualities on
$\bF_2$
and
$C$,
\pref{eq:1(OC(an-1),F)<1(OC(an),F)}
can be rewritten as
\begin{equation}\label{eq:equivalent_inequality}
h^0(C, \cO_C(-2-a_{n-1}) \otimes \cF|_C)
<
h^0(C, \cO_C(-2-a_{n}) \otimes \cF|_C).
\end{equation}
Since
$
r_n > 0
$
by the assumption,
combining with
\pref{eq:lower_bound},
we see that the LHS of
\pref{eq:equivalent_inequality}
$=$ the LHS of
\pref{eq:1(OC(an-1),F)<1(OC(an),F)}
is positive.
In particular,
$
\cO_C(-2-a_{n-1}) \otimes \cF|_C
$
contains at least one line bundle with non-negative degree
as a direct summand.
Thus the strict inequality
$a_{n-1}>a_n$
implies the strict inequality
\pref{eq:equivalent_inequality}.

In the remainder we prove \pref{eq:1(OC(an),F)>=rn}.
Note first that
\pref{eq:1(OC(an),F)>=rn}
is equivalent to
\begin{equation}\label{eq:proof;n=1;key_inequality}
{}^1(\cT/\cT^{n-1}, \cF)\le {}^2(\cT/\cT^{n-1}, \cT/\cT^{n-1}).
\end{equation}
In fact, note that the LHS of \pref{eq:proof;n=1;key_inequality}
equals
$r_n\cdot {}^1(\cO_C(a_n), \cF)$
and the RHS can be calculated as
\[
\begin{array}{ll}
& {}^2(\cT/\cT^{n-1}, \cT/\cT^{n-1})
=
(\cT/\cT^{n-1}, \cT/\cT^{n-1})
\quad (\textrm{Serre \ duality})
\\
= &
(\cO_C(a_n)^{\oplus r_n} , \cO_C(a_n)^{\oplus r_n})
= r_n^2.
\end{array}
\]
In order to show \pref{eq:proof;n=1;key_inequality}, consider the following short exact
sequence
\begin{equation}\label{eq:n-1-th-TEF_sequence}
0\to \cT/\cT^{n-1}\to \cE/\cT^{n-1}\to \cE/\cT=\cF\to 0.
\end{equation}
By applying the functor $(\cT/\cT^{n-1}, -)$ we obtain the exact sequence
\[{}^1(\cT/\cT^{n-1}, \cE/\cT^{n-1})\to {}^1(\cT/\cT^{n-1}, \cF)\to {}^2(\cT/\cT^{n-1}, \cT/\cT^{n-1}).\]
Hence it is enough to show that ${}^1(\cT/\cT^{n-1}, \cE/\cT^{n-1})=0$,
which is equivalent to
\begin{equation}\label{eq:the_key_vanishing}
{}^1(\cE/\cT^{n-1}, \cT/\cT^{n-1})=0
\end{equation}
by the Serre duality.
To show the latter, we apply the functor $(\cE/\cT^{n-1}, -)$ to the sequence \pref{eq:n-1-th-TEF_sequence}
to obtain the exact sequence
\[
\begin{array}{ll}
& (\cE/\cT^{n-1}, \cE/\cT^{n-1})
\xrightarrow[]{\epsilon}
(\cE/\cT^{n-1}, \cF)
\to
 {}^1(\cE/\cT^{n-1}, \cT/\cT^{n-1})
 \\
 \to
 &
 {}^1(\cE/\cT^{n-1}, \cE/\cT^{n-1})
 =
 0.
\end{array}
\]
The vanishing of the last entry, namely the rigidity of $\cE/\cT^{n-1}$,
can be checked by applying Mukai's lemma 
\cite[Lemma 2.1.4. 2.(a)]{MR1604186}
to the short exact sequence
\[
0
\to
\cT^{n-1}
\to
\cE
\to
\cE / \cT^{n-1}
\to
0
\]
and using the rigidity of $\cE$. For this
we have to check the vanishing of $(\cT^{n-1}, \cE/\cT^{n-1})$, and this follows from the fact that
$\cT^{n-1}$ is an extension of the line bundles $\cO_C(d)$ with $d>a_n$ and that
$\cE/\cT^{n-1}$ is an extension of line bundles $\cO_C(a_n)$ and the torsion free sheaf $\cF$.

Now since the map $\epsilon$ above is nontrivial, it is enough to show
$(\cE/\cT^{n-1}, \cF)=1$ for the vanishing
\pref{eq:the_key_vanishing}.
For this, we apply the functor $(- ,\cF)$ to \pref{eq:n-1-th-TEF_sequence}
to obtain the exact sequence
\[0\to (\cF, \cF)\to (\cE/\cT^{n-1}, \cF)\to (\cT/\cT^{n-1}, \cF).\]
Since $\cT/\cT^{n-1}$ is torsion and $\cF$ is torsion free, we see
$(\cT/\cT^{n-1}, \cF)=0$. Combining this with the fact $\cF$ is exceptional
(see \pref{lm:TEF_Lemma}),
we obtain
$(\cE/\cT^{n-1}, \cF)=1$ and thus conclude the proof.
\end{proof}

Let us now conclude the proof of
\pref{th:main_conjecture;the_case_of_sheaves}.
So far we have seen that the sequence \pref{eq:TEF_sequence}
is always of the form
\begin{equation}\label{eq:TEF_sequence;final_form}
0
\to
\cO_C(a)^{\oplus r}
\to
\cE
\to
\cF
\to
0
\end{equation}
for some $a\in \bZ$ and $r\in\bZ_{>0}$, with
$
\cF
$
an exceptional vector bundle.
Moreover by 
${}^1( \cO_C(a),\cE)={}^2( \cO_C(a),\cE)=0$ in  \pref{lm:dimensions_of_ext_groups_in_TEF_sequence},
we see that the exact triangle
\pref{eq:twist_def} for $\alpha=\cO_C(a)$ and $\beta=\cE$
gives rise to a short exact sequence
\pref{eq:the_inverse_twist_sequence}.

Let us check that the sequence \pref{eq:TEF_sequence;final_form}
is isomorphic to \pref{eq:the_inverse_twist_sequence}.
For this note first that
$$
(\cO_C(a),\cE) =\frac{t}{r}=r
$$
follows from
\pref{lm:dimensions_of_ext_groups_in_TEF_sequence}
and $f=1$.
On the other hand since
$
( \cE , \cF )
\cong
( \cF , \cF )
=
k
$
by \pref{lm:dimensions_of_ext_groups_in_TEF_sequence},
the two morphisms
$
\cE \twoheadrightarrow \cF
$
in
\pref{eq:TEF_sequence;final_form}
and
\pref{eq:the_inverse_twist_sequence}
should be isomorphic. Thus we obtain
an isomorphism between these two short exact sequences.
In particular we obtain
$\cE\cong T'_{\cO_C(a)} \cF$.
Thus we obtain the first half of
\pref{th:main_conjecture;the_case_of_sheaves}
\pref{it:Excis_the_inverse_twist_of_F}.

Consider the sequence 
\pref{eq:TEF_sequence;final_form}
and its restriction to $C$. 
Then we obtain the following commutative diagram:
\begin{align*}
\xymatrix{
0 \ar[r] & \cO_C(a)^{\oplus r}\ar[r]\ar@{=}[d] & \cE \ar[r]\ar@{->>}[d] & \cF\ar[r]\ar@{->>}[d] & 0
\\
0\ar[r]  & \cO_C(a)^{\oplus r}\ar[r]       & \cE|_C \ar[r]          & 
\cF|_C \ar[r]   & 0.}
\end{align*}
We see from the snake lemma
that the sequence  \pref{eq:FET_sequence} is of the form  
\begin{equation}\label{eq:restriction_of_E}
0\to \cF(-C) \to \cE\to \cE|_C \to 0.
\end{equation}
This means that $\cF(-C)\cong \cF^+$, which is
\pref{th:main_conjecture;the_case_of_sheaves}.
\pref{it:F_is_an_exceptional_vector_bundle}.
On the other hand, by
\cite[Lemma 4.15]{Ishii-Uehara_ADC} we have
 $$
 T'_{\cO_C(a+1)} \cE
 \cong
 T'_{\cO_C(a+1)} \circ T'_{\cO_C(a)} \cF
 \cong
 \cF(-C).
 $$   
Therefore the exact triangle
\pref{eq:inverse-twist_def} for
$
\alpha = \cO_C(a+1)
$
and
$
\beta=\cE
$
gives rise to a short exact sequence
\begin{equation}\label{eq:F(-C)_exact_sequence}
0\to \cF(-C) \to \cE \to (\cE,\cO_C(a+1))^\vee\otimes \cO_C(a+1)\to 0.
\end{equation} 
Here note that since the sheaf
${^1}(\cE,\cO_C(a+1))^\vee\otimes \cO_C(a+1)$ appears as a subsheaf of the
torsion-free sheaf
$
\cF(-C)
$, it has to vanish.

Since there is no morphism from $\cF(-C)$ to $\cO_C(a+1)$ by the equality 
$(\cF,\cT)=0$ in \pref{lm:dimensions_of_ext_groups_in_TEF_sequence},
we see
$
( \cF(-C) , \cE )
\cong
( \cF(-C) , \cF(-C) )
=
k
$.
Hence the dual of the arguments before tells us that
the two morphisms
$
\cF(-C) \hookrightarrow \cE
$
in \pref{eq:restriction_of_E} and \pref{eq:F(-C)_exact_sequence} are 
isomorphic to each other. This implies that 
the sequence \pref{eq:FET_sequence} is isomorphic to 
the sequence \pref{eq:the_twist_sequence} as desired. \qed

%
%

\section{Exceptional sheaves sharing classes in $K_0$}\label{sc:Exceptional_sheaves_sharing_classes}
Recall that every exceptional object on a del Pezzo surface $X$ is
isomorphic to a shift of either an exceptional vector bundle
or a line bundle on a $ ( -1 ) $-curve.
If $\cE$ and $\cE'$ are exceptional vector bundles on $X$
satisfying the equality
\[
 \frac{c_1(\cE)}{\rank{\cE}} = \frac{c_1(\cE')}{\rank{\cE'}} ,
\]
then by \cite[Corollary 2.5]{MR966982}
they are isomorphic.
Therefore exceptional objects on
$ X $ are uniquely determined by their classes
in $ K_0 ( X ) $, up to even shifts
$ \bZ [ 2 ] $. 

Such uniqueness is no longer true for exceptional objects on $ \bF_2 $.
In Subsections \ref{subsc:Excinotvec} and \ref{subsc:Excivec},
starting with any exceptional sheaf $\cE$ on $\bF_2$,
we construct exceptional objects $\cE_i$ for $i\in \bZ$ sharing the classes in $K_0(\bF_2)$ with $\cE$.
Some of them are sheaves, and in \pref{subsc:sharing_sheaves}
we prove that thus constructed sheaves exhaust the set of exceptional sheaves sharing the class with
$\cE$.

\subsection{Construction of $\cE_i$ for exceptional sheaves with nontrivial torsion part}\label{subsc:Excinotvec}
Suppose that $\cE$ is an exceptional sheaf whose torsion part is nontrivial.
In this case, by \pref{th:main_conjecture;the_case_of_sheaves}, we have a short exact sequence
$$
0
\to
\cO_C(a_\cE)^{\oplus r_\cE}
\to
\cE
\to
\cF\cong T_{\cO_C(a_\cE)}\cE
\to
0
$$
for some integers $a_\cE$ and $r_\cE>0$.
Let us define integers $b_\cE$ and $s$ ($0<s\le \rank \cE$) by the isomorphism 
$$
\cF|_C\cong \cO_C(b_\cE)^{\oplus s}\oplus \cO_C(b_\cE+1)^{\oplus\rank \cE-s}
$$
(again use \cite[Remark 2.3.4]{MR1604186}).

Since $r_\cE={}^1 (\cF,\cO _C(a_\cE))$, we get the equality 
$$
r_\cE=(b_\cE-a_\cE)\cdot \rank \cE-s.
$$
For each integer $i$, set
\begin{align*}
\cF_i&:=
\cF \otimes \cO((b_\cE-a_\cE-1-i)C) \ \mathrm{and}\\
b_0&:=2a_\cE-b_\cE+2.
\end{align*}
Then we see
$
\cF= \cF_{b_\cE-a_\cE-1} 
$
and obtain the isomorphism 
\begin{equation*}\label{F|_C}
\cF_i|_C\cong \cO_C(b_0+2i)^{\oplus s}\oplus \cO_C(b_0+2i+1)^{\oplus\rank \cE-s}.
\end{equation*}
Finally we set 
$$
 \cE_i:= T_{\cO_C(b_0+i)}\cF_{i+1}.
$$
Then notice that 
we have an isomorphism
$$
\cE_i\cong  T'_{\cO_C(b_0+i-1)}\cF_i
$$ 
by the isomorphisms of the functors
\begin{equation*}\label{eq:lemma_4.15_IU05}
T_{\cO_C(a-1)}\circ T_{\cO_C(a)}\cong \otimes \cO (C)
\end{equation*}
for any $a\in \bZ$ (\cite[Lemma 4.15]{Ishii-Uehara_ADC}).
Since
$
a_\cE = b_0 + ( b_\cE-a_\cE-1)-1
$
and
$
\cF
=
\cF_{b_\cE-a_\cE-1}
$, we obtain the isomorphisms 
$$
\cE\cong  T'_{\cO_C(a_\cE)}\cF\cong \cE_{b_\cE-a_\cE-1}.
$$

%
%

\subsection{Relations among the objects $\cE_i$}\label{subsc:Exciproperties}

Let $\cE$ be an exceptional sheaf with nontrivial torsion part
 on $\bF_2$ as in Subsection \ref{subsc:Excinotvec}.
We study properties of $\cE_i$ constructed in
\pref{subsc:Excinotvec}.

The relationship among the objects
$\cF_i$
and
$\cE_i$
is summarized in the following diagram.

\begin{align*}
\xymatrixcolsep{3pc}
\xymatrix{
& 
\cF_{i+1}
\ar@/^{1pc}/[rr]^{\otimes \cO (C)}
\ar@{~>}[rd]^{T_{\cO_C(b_0+i)}} 
&&
\cF_i
\ar@/^{1pc}/[rr]^{\otimes \cO (C)}
\ar@{~>}[rd]^{T_{\cO_C(b_0+i-1)}} 
&&
\cF_{i-1}
\ar@{~>}[rd]^{T_{\cO_C(b_0+i-2)}}
&
\\
\cE_{i+1}
\ar@{~>}[ru]_{T_{\cO_C(b_0+i)}}
&&
\cE_{i}
\ar@{~>}[ru]_{T_{\cO_C(b_0+i-1)}} 
&&
\cE_{i-1}
\ar@{~>}[ru]_{T_{\cO_C(b_0+i-2)}}
&&
\cE_{i-2}
}
\end{align*}

\begin{claim}\label{cl:E_i_and_E_have_the_same_class}
We have
$[\cE_i]=[\cE]$ in $K_0(\bF_2)$ for any $i\in \bZ$. 
\end{claim}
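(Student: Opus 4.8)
The plan is to show that the class $[\cE_i]$ does not depend on $i$, and then to invoke the already-established isomorphism $\cE \cong \cE_{b_\cE - a_\cE - 1}$ to conclude $[\cE_i] = [\cE]$ for every $i$.

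First I would compare two consecutive objects using the two descriptions of the $\cE_j$ recorded in \pref{subsc:Excinotvec}. By definition $\cE_{i-1} = T_{\cO_C(b_0+i-1)}\cF_i$, while the alternative description gives $\cE_i \cong T'_{\cO_C(b_0+i-1)}\cF_i$. Thus $\cE_{i-1}$ and $\cE_i$ are obtained from one and the same object $\cF_i$ by applying, respectively, the twist $T_\alpha$ and the inverse twist $T'_\alpha$ along the same spherical object $\alpha = \cO_C(b_0+i-1)$ (recall that every line bundle on $C$ is spherical).

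The key step is then that $T_\alpha$ and $T'_\alpha$ induce the same automorphism of $K_0(\bF_2)$. Since $\dim \bF_2 = 2$ is even, \pref{rm:spherical_twist_on_K_0} tells us that $T_\alpha$ acts on $K_0(\bF_2)$ as a reflection, so the induced map squares to the identity. As $T'_\alpha$ is a quasi-inverse of $T_\alpha$, the map it induces on $K_0(\bF_2)$ is the inverse of the reflection, hence equals the reflection itself; therefore $[T'_\alpha \beta] = [T_\alpha \beta]$ for all $\beta$. Applying this with $\beta = \cF_i$ yields $[\cE_i] = [\cE_{i-1}]$, so $[\cE_i]$ is constant in $i$, and the isomorphism $\cE \cong \cE_{b_\cE - a_\cE - 1}$ finishes the argument.

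I do not expect a genuine obstacle here: once the two-sided description of consecutive $\cE_i$ is in place, the statement reduces entirely to the $K_0$-computation of \pref{rm:spherical_twist_on_K_0}. The only point deserving a word of care is the identity of $T_\alpha$ and $T'_\alpha$ on $K_0$, which, if one prefers not to appeal to the reflection statement, I would instead verify directly from the triangles \pref{eq:twist_def} and \pref{eq:inverse-twist_def}: these give $[T_\alpha \beta] = [\beta] - \chi(\alpha,\beta)[\alpha]$ and $[T'_\alpha \beta] = [\beta] - \chi(\beta,\alpha)[\alpha]$, and the equality $\chi(\alpha,\beta) = \chi(\beta,\alpha)$ follows from Serre duality on $\bF_2$ together with $\alpha \otimes \omega_{\bF_2} \cong \alpha$.
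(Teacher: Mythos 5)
Your proposal is correct and is essentially the paper's own argument: the paper likewise deduces the claim from the isomorphism $\cE\cong\cE_{b_\cE-a_\cE-1}$ together with \pref{rm:spherical_twist_on_K_0}, phrasing the key step as $T^2_{\cO_C(b_0+i)}\cE_{i+1}\cong\cE_i$ plus the triviality of double twists on $K_0(\bF_2)$. This is the same content as your observation that $\cE_{i-1}=T_{\cO_C(b_0+i-1)}\cF_i$ and $\cE_i\cong T'_{\cO_C(b_0+i-1)}\cF_i$ have equal classes because the twist acts as an involution on $K_0(\bF_2)$.
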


\begin{proof}
As we have seen above, there exists an isomorphism
$\cE\cong \cE_i$ for some $i\in \bZ$. 
Since
$
T^2_{\cO_C(a)}
$
acts on
$
K_0(\bF_2)
$
trivially (see \pref{rm:spherical_twist_on_K_0}),
the result follows from the isomorphisms
$
T^2_{\cO_C(b_0+i)}\cE_{i+1}
\cong
\cE_i
$ for all $i\in \bZ$.
\end{proof}

\begin{claim}
\begin{enumerate}[(1)]
\item
For any integer  $i\ge -1$, $\cE_i$ is an exceptional sheaf.
\item
For any integer  $i<-1$, $\cE_i$ is an exceptional object of length $1$.
\item
$\cE_i$ is a vector bundle if and only if either $i=-1$, \emph{or}
$i=0$ and $s=\rank \cE$.
\item
There exists an isomorphism 
$$
\cE_{-1}|_C\cong \cO_C(b_0-1)^{\oplus\rank \cE-s}
\oplus
\cO_C(b_0)^{\oplus s}.
$$
\end{enumerate}
\end{claim}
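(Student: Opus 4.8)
The plan is to analyze all four parts at once by studying $\cE_i=T_{\cO_C(b_0+i)}\cF_{i+1}$ through its defining twist triangle. Since $\cF_{i+1}=\cF\otimes\cO((b_\cE-a_\cE-2-i)C)$ is a line-bundle twist of the exceptional bundle $\cF$ it is itself an exceptional vector bundle, and because $T_{\cO_C(b_0+i)}$ is an autoequivalence every $\cE_i$ is automatically an exceptional object; this already accounts for the word ``exceptional'' in (1) and (2). The remaining content is to locate the cohomology sheaves of $\cE_i$. First I would compute $\Ext^{\bullet}_{\bF_2}(\cO_C(b_0+i),\cF_{i+1})$ by adjunction for $\iota\colon C\hookrightarrow\bF_2$: using $\iota^{!}\cG\cong\cG|_C(-2)[-1]$ for a vector bundle $\cG$ one gets $\Ext^{j}_{\bF_2}(\cO_C(b_0+i),\cF_{i+1})\cong H^{j-1}(C,\cF_{i+1}|_C(-b_0-i-2))$, and feeding in $\cF_{i+1}|_C\cong\cO_C(b_0+2i+2)^{\oplus s}\oplus\cO_C(b_0+2i+3)^{\oplus(\rank\cE-s)}$ yields $\Ext^0=0$ together with $e_1:=\dim\Ext^1=\max(0,i+1)s+\max(0,i+2)(\rank\cE-s)$ and $e_2:=\dim\Ext^2=\max(0,-i-1)s+\max(0,-i-2)(\rank\cE-s)$.

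Feeding these into the long exact sequence of cohomology sheaves of $\RHom(\cO_C(b_0+i),\cF_{i+1})\otimes\cO_C(b_0+i)\to\cF_{i+1}\to\cE_i$ gives $\scH^{j}(\cE_i)=0$ for $j\ne0,1$, a sequence $0\to\cF_{i+1}\to\scH^0(\cE_i)\to\cO_C(b_0+i)^{\oplus e_1}\to0$, and $\scH^1(\cE_i)\cong\cO_C(b_0+i)^{\oplus e_2}$. Since $s>0$, one checks directly that $e_2=0$ exactly when $i\ge-1$ and $e_1=0$ exactly when $i\le-2$. Hence for $i\ge-1$ the object $\cE_i$ is concentrated in degree $0$ and is therefore an exceptional sheaf, proving (1), while for $i\le-2$ one has $\scH^0(\cE_i)=\cF_{i+1}\ne0$ and $\scH^1(\cE_i)=\cO_C(b_0+i)^{\oplus e_2}\ne0$, so $\cE_i$ is a genuine complex with cohomology exactly in degrees $0$ and $1$, i.e.\ of length $1$, proving (2).

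For (3) the task is to decide, for $i\ge-1$, when the sheaf $\cE_i$ is locally free. As $\cE_i$ is exceptional, hence rigid, once it is torsion-free it is a vector bundle by \cite[Lemma 2.2.1]{MR1604186} (just as in \pref{lm:TEF_Lemma}), so everything reduces to torsion-freeness. I would detect torsion through the universal extension $0\to\cF_{i+1}\to\cE_i\to Q\to0$ with $Q=\cO_C(b_0+i)^{\oplus e_1}$: rotating the triangle identifies its class $\eta\in\Ext^1(Q,\cF_{i+1})\cong\mathrm{End}(V)$, where $V:=\Ext^1(\cO_C(b_0+i),\cF_{i+1})$, with the identity. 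Because $\cF_{i+1}$ is torsion-free, any torsion of $\cE_i$ injects into $Q$ and splits off there, so $\cE_i$ is torsion-free if and only if $\eta$ restricts nontrivially to every line subbundle $\cO_C(c)\hookrightarrow Q$; equivalently the Yoneda composition $\Phi_c\colon V\otimes\Hom(\cO_C(c),\cO_C(b_0+i))\to\Ext^1(\cO_C(c),\cF_{i+1})$ is injective for all $c$. Transporting $\Phi_c$ through the same adjunction turns it into the multiplication map $H^0(\cO_C(b_0+i-c))\otimes\Hom_C(\cO_C(b_0+i+2),\cF_{i+1}|_C)\to\Hom_C(\cO_C(c+2),\cF_{i+1}|_C)$ on $\bP^1$. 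A dimension count as $c\to-\infty$ shows that injectivity can hold only when $e_1\le\rank\cE$, i.e.\ only for $i=-1$ or for $i=0$ with $s=\rank\cE$; in these two cases an explicit computation shows $\Phi_c$ is injective for every $c$. I expect this step --- proving that the tautological extension is \emph{maximally} non-split along $C$ in precisely these two cases, so that the torsion quotient $Q$ with $e_1>0$ is nonetheless absorbed into a locally free sheaf --- to be the main obstacle.

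Finally, for (4), knowing from (3) that $\cE_{-1}$ is locally free (here $e_1=\rank\cE-s$ and $e_2=0$), I would apply $L\iota^{*}$ to the defining triangle. Local freeness makes $L\iota^{*}\cE_{-1}=\cE_{-1}|_C$ a sheaf, while $L\iota^{*}$ of the torsion term contributes a $\mathrm{Tor}_1$-piece $\cO_C(b_0+1)^{\oplus(\rank\cE-s)}$; chasing the resulting long exact sequence on $C$ together with $\cF_0|_C\cong\cO_C(b_0)^{\oplus s}\oplus\cO_C(b_0+1)^{\oplus(\rank\cE-s)}$, and using the injectivity from (3) to split off the two copies of $\cO_C(b_0+1)^{\oplus(\rank\cE-s)}$, collapses everything to $0\to\cO_C(b_0)^{\oplus s}\to\cE_{-1}|_C\to\cO_C(b_0-1)^{\oplus(\rank\cE-s)}\to0$. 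This splits because $\Ext^1_C(\cO_C(b_0-1),\cO_C(b_0))=0$, giving $\cE_{-1}|_C\cong\cO_C(b_0-1)^{\oplus(\rank\cE-s)}\oplus\cO_C(b_0)^{\oplus s}$ and completing (4).
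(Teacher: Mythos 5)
Your proposal is correct, and for parts (1), (2) and (4) it is essentially the paper's own argument: you read off the cohomology sheaves of $\cE_i$ from the defining twist triangle after computing $\RHom(\cO_C(b_0+i),\cF_{i+1})$ (you do this via $\iota^!$, while the paper computes the Serre-dual group $\RHom(\cF_i,\cO_C(b_0+i-1))$ via the adjunction $L\iota^*\dashv\iota_*$; your $e_1,e_2$ agree with the paper's $\pm r_{i+1}$), and your (4) is the paper's restriction of the $i=0$ sequence to $C$ with the same $\scTor_1$ computation. The genuine divergence is (3). The paper gets it immediately from the \emph{dual} presentation $\cE_i\cong T'_{\cO_C(b_0+i-1)}\cF_i$, already available from the construction in \pref{subsc:Excinotvec} via $T_{\cO_C(a-1)}\circ T_{\cO_C(a)}\cong\otimes\cO(C)$ (\cite[Lemma 4.15]{Ishii-Uehara_ADC}): the sequence \eqref{eq:inversetwistF_i} exhibits the torsion subsheaf $\cO_C(b_0+i-1)^{\oplus r_i}$ whenever $r_i>0$ (i.e.\ $i\ge 1$, or $i=0$ with $s<\rank\cE$), gives $\cE_0\cong\cF_0\cong\cE_{-1}$ when $r_0=0$, and \eqref{eq:inversetwistF_i<0} embeds $\cE_{-1}$ into the bundle $\cF_{-1}$, so it is torsion-free. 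You never invoke this second presentation; instead you decide torsion-freeness from the extension class $\mathrm{id}_V$ of the single sequence $0\to\cF_{i+1}\to\cE_i\to Q\to 0$ via injectivity of the Yoneda maps $\Phi_c$. I checked that this route closes: the asymptotic count (source $\sim(-c)\,e_1$, target $\sim(-c)\rank\cE$ as $c\to-\infty$) excludes every case with $e_1>\rank\cE$, which for $i\ge -1$ leaves exactly $i=-1$ and ($i=0$, $s=\rank\cE$); and in those two cases $V=\Hom_C(\cO_C(b_0+i+2),\cF_{i+1}|_C)$ consists of constant maps into the top-degree summand of $\cF_{i+1}|_C$ (because $\Hom_C(\cO_C(b_0+1),\cO_C(b_0))=0$), so $\Phi_c$ is the obvious injection for every $c$. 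So your (3) is valid but substantially heavier: it re-proves by hand what the dual presentation yields for free, the payoff being that this step needs no appeal to \cite[Lemma 4.15]{Ishii-Uehara_ADC} and produces a reusable torsion criterion for universal extensions along a spherical sheaf. Two harmless inaccuracies: ``$e_1=0$ exactly when $i\le -2$'' fails for $i=-1$, $s=\rank\cE$ (there $e_1=0$ as well, consistently with $\cE_{-1}\cong\cF_0$); and ``any torsion of $\cE_i$ injects into $Q$ and splits off there'' should be stated as: the extension pulled back to the image of the torsion in $Q$ splits, which is exactly what your $\Phi_c$-criterion encodes.
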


\begin{proof}
We start with some preparatory computations for the twist functors.

Put
$
 r_i=(i+1)\rank \cE-s \in \bZ.
$ 
For each $i\ge 0$ we have $r_i\ge 0$, 
and $r_i=0$ occurs precisely when $i=0$ and $s=\rank \cE$.
The following calculation 
\begin{align}\label{eq:RHom_computation}
 & \RHom_{D ( \bF_2 ) } (\cF_i,\cO _C(b_0+i-1))
\notag\\
\cong &
 \Ext^1_{C} \left( \cO_C(b_0+2i)^{\oplus s} \oplus
 \cO_C(b_0+2i+1)^{\oplus \rank \cE-s}
 , \cO_C ( b_0 + i -1 ) \right) [ -1 ]\notag
\\
 \cong &  k^{\oplus r_i}[-1]
\end{align}
tells us that the defining exact triangles of the twist functor
$T_{\cO_C(b_0+i-1)}$ and its quasi-inverse $T'_{\cO_C(b_0+i-1)}$,
respectively, are equivalent to the following short exact sequences.
\begin{align}
0
\to
\cF_i \to  \cE_{i-1}= T_{\cO_C(b_0+i-1)}\cF_i
\to
\cO_C(b_0+i-1)^{\oplus r_i}
\to
0 \label{eq:twistF_i}
\\
0\to \cO_C(b_0+i-1)^{\oplus r_i}\to \cE_i\cong T'_{\cO_C(b_0+i-1)}\cF_i\to 
\cF_i\to 0 \label{eq:inversetwistF_i}
\end{align}

For each $i<0$, we have $r_i<0$.
In this case the calculation
\begin{align*}
 &
 \RHom_{ D ( \bF_2 ) } \left( \cF_i , \cO _C ( b_0 + i -1 ) \right)
\\
 \cong &
 \Hom_{ C } \left( \cO_C(b_0+2i)^{\oplus s}
 \oplus
 \cO_C(b_0+2i+1)^{\oplus \rank \cE-s},\cO_C(b_0+i-1) \right)
\\
 \cong & \   k^{\oplus -r_i}
\end{align*}
tells us that the defining triangles of the functors
$
 T_{\cO_C(b_0+i-1)}
$
and
$
 T '_{\cO_C(b_0+i-1)}
$
are as follows.
\begin{align}
\cO_C(b_0+i-1)^{\oplus -r_i}[-2]\to \cF_i\to 
 \cE_{i-1}= T_{\cO_C(b_0+i-1)}\cF_i \label{eq:twistF_i<0}\\
 \cE_{i}\cong T'_{\cO_C(b_0+i-1)}\cF_i\to \cF_i\to \cO_C(b_0+i-1)^{\oplus -r_i} \label{eq:inversetwistF_i<0}
\end{align}

Now we use all these results to obtain the conclusions.
The statement (1) follows from the exact sequence  \eqref{eq:twistF_i}.

To see (2),
note that \eqref{eq:twistF_i<0} implies
\[
\scH^p(\cE_{i-1})=
\begin{cases}
\cF_i & ( p = 0 )\\
\cO_C(b_0+i-1)^{\oplus -r_i} & ( p = 1 ) \\
0 & ( \mathrm{otherwise} ).
\end{cases}
\]

Next we check the assertion (3).
If
$
i \ge 1
$,
\eqref{eq:inversetwistF_i} implies that
$\cE_i$ has torsion.
If
$
 i = -1
$,
from \eqref{eq:inversetwistF_i<0}
we see that $\cE_{-1}$
has no torsion and hence is a vector bundle.
When
$
i = 0
$,
if we further assume
$
r_0=0
\iff
s=\rank \cE
$,
from \eqref{eq:inversetwistF_i} and \eqref{eq:twistF_i}
we obtain 
$
\cE_0\cong \cF_0\cong \cE_{-1}.
$
Hence
$
 \cE_0
$
is a vector bundle in this case.
Finally if $r_0>0$, then \eqref{eq:inversetwistF_i}
forces $\cE_0$ to have torsion.

To show the last assertion (4),
use the isomorphism
$
\scTor_1^ {\bF_2}(\cO_C,\cO_C(b_0-1))\cong \cO_C(b_0+1)
$
and the restriction of \eqref{eq:twistF_i} for $i=0$ 
to $C$.
\end{proof}

\begin{remark}\label{rem:E_iE_j}
By the above proof, we know that 
$\cE_i\cong \cE_j$ if and only if either $i=j$ \emph{or} 
$\{i,j\}=\{-1,0\}$ and $s=\rank \cE$.
\end{remark}

%
%

\subsection{Construction of $\cE_i$ for exceptional vector bundles}\label{subsc:Excivec}
In Subsection \ref{subsc:Excinotvec}, we constructed an 
exceptional vector bundle $\cE_{-1}$ which shares the class in 
$K_0(\bF_2)$ with 
a given exceptional sheaf $\cE$ with nontrivial torsion part. 
In this section, let us follow the procedure in an opposite direction;
starting with an exceptional vector bundle  
$\cE_{-1}$ on $\bF_2$, 
let us recover exceptional objects $\cE_i$ 
for each $i$. See the precise statement in \pref{rm:compatible}.

Suppose that an exceptional vector bundle $\cE'$ is given.
Set 
$$
\cE'_{-1}:=\cE'.
$$
Since $\cE'_{-1}|_C$ is rigid 
by \cite[Remark 2.3.4]{MR1604186}, there exist integers $b'_0$ and $s'$  ($0<s'\le \rank \cE'$) such that
$$
\cE'_{-1}|_C
\cong
\cO_C(b'_0-1)^{\oplus\rank \cE'-s'}\oplus\cO_C(b'_0)^{\oplus s'}.
$$
Next let us define 
$$
 \cF'_0:= T'_{\cO_C(b'_0-1)}\cE'_{-1}.
$$
Then we see 
\begin{align*}
 & \RHom_{D ( \bF_2 ) } (\cE'_{-1},\cO _C(b'_0-1))
\\
 \cong &
 \left( \cO_C( b'_0 - 1 )^{ \oplus \rank \cE' - s'}
 \oplus
 \cO_C ( b'_0 )^{ \oplus s'} ,
 \cO_C ( b'_0 - 1 ) \right)
\\
 \cong &
 k^{ \oplus \rank \cE' - s' },
\end{align*}
so as to obtain the following short exact sequence
\begin{equation}\label{eqn:F0E-1}
0\to \cF'_0=T'_{\cO_C(b'_0-1)}\cE'_{-1}\to \cE'_{-1}\to \cO_C(b'_0-1)^{\oplus\rank \cE'-s'}\to 0.
\end{equation}
This implies that $\cF'_0$ is also an exceptional vector bundle.
Successively we define 
$$
 \cF'_i := \cF'_0 ( - i C ) , \ \cE'_i := T_{ \cO_C ( b'_0 + i ) } \cF'_{ i + 1 }
$$
for any $i\in \bZ$.
Using the isomorphism
$
\scTor_1^ {\bF_2}(\cO_C,\cO_C(b_0-1))\cong \cO_C(b_0+1)
$
and restricting \eqref{eqn:F0E-1} to $C$, we obtain 
$$
\cF'_0|_C\cong \cO_C(b'_0)^{\oplus s'}\oplus \cO_C(b'_0+1)^{\oplus \rank \cE'-s'},
$$
and 
\begin{equation*}\label{F|_C}
\cF'_i|_C\cong \cO_C(b'_0+2i)^{\oplus s'}\oplus \cO_C(b'_0+2i+1)^{\oplus\rank \cE'-s'}.
\end{equation*}
Then a direct computation as in \pref{eq:RHom_computation}
 yields that $\cE'_i$ is a sheaf if and only if $i\ge -1$.

\begin{remark}\label{rm:compatible}
We can see, from the definitions, that the constructions of $\cE_i$
and $\cE'_i$  given in Subsections \ref{subsc:Excinotvec} and \ref{subsc:Excivec},
respectively, are inverses of each other
in the following sense.

Given an exceptional sheaf
$\cE$ with nontrivial torsion part,
construct the exceptional objects
$\cE_i$ as in \pref{subsc:Excinotvec}.
From the exceptional vector bundle
$\cE':=\cE_{-1}$,
we can also construct the exceptional objects
$\cE'_i$ as in \pref{subsc:Excivec}.
Then we see 
$
 \cE_i\cong \cE'_i
$ 
and
$
 \cF_i\cong \cF'_i
$
for each
$
 i \in \bZ
$.
In fact,
$
 \cE_{-1}
 \cong
 \cE'_{-1}
$
holds by definition.
Since the objects
$
 \cE_i , \cF_i
$
for
$
 i \in \bZ
$
and
$
 \cE'_i , \cF'_i
$
for
$
 i \in \bZ
$
satisfy the same recursive relations,
we obtain the conclusion;
note that the numbers
$
 b_0
$
and 
$
b'_0
$
in
Subsections \ref{subsc:Excinotvec} and \ref{subsc:Excivec},
respectively,
depend only on the restrictions to
$
 C
$
of
$
 \cE_{-1}
$
and
$
 \cE'_{-1}
$.
Since
these two sheaves are known to be isomorphic,
we obtain $b_0=b'_0$.

Similarly, given an exceptional vector bundle $\cE'$,
construct the exceptional objects
$\cE'_i$ as in \pref{subsc:Excivec}.
Choose an integer $i \ge 0$ such that
$\cE:=\cE'_i$ is an exceptional sheaf with nontrivial torsion part,
and construct the exceptional objects
$\cE_i$ as in \pref{subsc:Excinotvec}.
Then we see 
$
 \cE_i\cong \cE'_i
$ 
and
$
 \cF_i\cong \cF'_i
$
for each
$
 i \in \bZ
$.
To see this, it is again enough to check
$
 \cE_{-1}
 \cong
 \cE'_{-1}
$.
Note, by their constructions,
that they are exceptional vector bundles whose classes in
$
 K_0( \bF_2 )
$
are the same.
Then we can use \pref{lem:Gor_Kul} to see that
these two vector bundles should be isomorphic.
\end{remark}

\subsection{Exceptional sheaves sharing the same class}\label{subsc:sharing_sheaves}

We start with the following lemma.

\begin{lemma}\label{lem:Gor_Kul}
Let $\cE$ and $\cE'$ be exceptional vector bundles on $\bF _2$.
Suppose that 
the equality $\frac{c_1(\cE)}{\rank{\cE}}=\frac{c_1(\cE')}{\rank{\cE'}}$ 
holds in
$
\Pic{(\bF_2)}_{\bQ}
$.
Then $\cE\cong \cE'$.
\end{lemma}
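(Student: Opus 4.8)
The plan is to argue by contradiction, exploiting that exceptional bundles on $\bF_2$ are $\mu_{(-K_{\bF_2})}$-stable together with a Riemann--Roch computation forced by the equal-slope hypothesis. Throughout write $r=\rank\cE$, $r'=\rank\cE'$ and let $\lambda=c_1(\cE)/r=c_1(\cE')/r'\in\Pic(\bF_2)_\bQ$ be the common slope; the hypothesis gives $r'c_1(\cE)=r\,c_1(\cE')$ as integral classes.

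First I would record the stability input. By \cite{MR1604186} every exceptional vector bundle on $\bF_2$ is $\mu_{(-K_{\bF_2})}$-stable (the proof of \pref{lm:TEF_Lemma} already produces semistability; stability under the nef and big polarization $-K_{\bF_2}$ is the point supplied by \emph{loc.\ cit.}). Two consequences follow formally. (i) Since $\mu_{(-K_{\bF_2})}(\omega_{\bF_2})=K_{\bF_2}\cdot(-K_{\bF_2})=-8<0$, the bundle $\cE\otimes\omega_{\bF_2}$ is stable of slope strictly smaller than that of $\cE'$, so there is no nonzero map $\cE'\to\cE\otimes\omega_{\bF_2}$; by Serre duality $\Ext^2(\cE,\cE')=0$, and symmetrically $\Ext^2(\cE',\cE)=0$. (ii) A nonzero homomorphism between stable bundles of the same slope is an isomorphism, because its image is simultaneously a quotient of $\cE$ and a subsheaf of $\cE'$, hence has slope exactly $\lambda\cdot(-K_{\bF_2})$, forcing it to equal both. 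Therefore, if $\cE\not\cong\cE'$, then $\Hom(\cE,\cE')=\Hom(\cE',\cE)=0$, whence $\chi(\cE,\cE')\le 0$ and $\chi(\cE',\cE)\le 0$ by (i).

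Next I would compute the cross Euler characteristics by Hirzebruch--Riemann--Roch on $\bF_2$, using $\chi(\cE,\cE)=\chi(\cE',\cE')=1$ to eliminate the second Chern characters. A short calculation with the equal-slope relations, in which all the terms involving $\lambda^2$ and the canonical class cancel, yields the clean identity
\[
\chi(\cE,\cE')+\chi(\cE',\cE)=\frac{\rank\cE}{\rank\cE'}+\frac{\rank\cE'}{\rank\cE}\ge 2 .
\]
In particular the left-hand side is strictly positive, contradicting $\chi(\cE,\cE')+\chi(\cE',\cE)\le 0$ from the previous step. Hence the assumption $\cE\not\cong\cE'$ is untenable and $\cE\cong\cE'$.

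The routine part is the Riemann--Roch bookkeeping leading to the displayed identity; the genuine obstacle is the stability input. Precisely, since $-K_{\bF_2}$ is only nef and big, not ample, the slope arguments in (i) and (ii) must be run inside the $(-K_{\bF_2})$-stability formalism of \cite{MR1604186}, and one must ensure that exceptional bundles are actually \emph{stable}, not merely semistable, in that sense — semistability alone would not force a nonzero equal-slope homomorphism to be an isomorphism. Once this stability is secured, the remainder of the argument is formal.
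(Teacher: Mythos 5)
Your proposal is correct and takes essentially the same route as the paper: the paper's proof of \pref{lem:Gor_Kul} simply declares it ``analogous to \cite[Corollary 2.5]{MR966982}'' with the $(-K_{\bF_2})$-stability of exceptional bundles supplied by \cite[Corollary 2.2.9]{MR1604186}, and that Gorodentsev argument is exactly the one you spell out (stability of both bundles, vanishing of $\Hom$ and $\Ext^2$ in both directions when they are non-isomorphic, and the Riemann--Roch identity $\chi(\cE,\cE')+\chi(\cE',\cE)=\frac{r}{r'}+\frac{r'}{r}\ge 2$ giving the contradiction). You have merely unpacked the details the paper delegates to the references, including the correct caveat that the slope formalism must be Kuleshov's, since $-K_{\bF_2}$ is only nef and big.
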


\begin{proof}
The proof is analogous to that of 
\cite[Corollary 2.5]{MR966982} in the case of del Pezzo surfaces. The $(-K)$-stability of exceptional vector bundle on $\bF_2$ follows from 
\cite[Corollary 2.2.9]{MR1604186}.
\end{proof}

Now we give the main result of this section.


\begin{theorem}
Suppose that $\cE$ and $\cE'$ are exceptional sheaves on $\bF _2$.
Then the following conditions are equivalent.
\begin{enumerate}[(1)]
\item
$
 \frac{c_1(\cE)}{\rank{\cE}}
 =
 \frac{c_1(\cE')}{\rank{\cE'}}
 \in \Pic{(\bF_2)}_{\bQ}
$
.
\item
$
\cE_i \cong \cE'_i
$
for any
$
 i \in \bZ
$.

\item
$
 [\cE] = [\cE'] \in K_0(\bF_2)
$.

\item
$
 \cE'\cong \cE_i
$
for some
$
 i \ge -1
$.
\end{enumerate}
\end{theorem}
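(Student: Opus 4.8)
The plan is to prove the four conditions equivalent by establishing a cycle of implications. The most natural route is $(2) \Rightarrow (4) \Rightarrow (3) \Rightarrow (1) \Rightarrow (2)$, since each arrow uses a tool already assembled in the excerpt. The implication $(2) \Rightarrow (4)$ is essentially a definitional unwinding: condition $(2)$ says the whole family $\{\cE_i\}$ attached to $\cE$ coincides with the family $\{\cE'_i\}$ attached to $\cE'$, and since $\cE'$ itself occurs as one of the $\cE'_i$ (namely as a sheaf for some index $i \ge -1$, by the construction in \pref{subsc:Excivec}), we get $\cE' \cong \cE_i$ for that same $i$. The implication $(4) \Rightarrow (3)$ is immediate from \pref{cl:E_i_and_E_have_the_same_class}, which already tells us $[\cE_i] = [\cE]$ in $K_0(\bF_2)$ for every $i$.

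\textbf{The arrow $(3) \Rightarrow (1)$.} This is purely numerical. The class in $K_0(\bF_2)$ determines the rank and the first Chern class, hence the slope $c_1/\rank$; so equality of classes trivially forces equality of slopes in $\Pic(\bF_2)_{\bQ}$. I would simply remark that the rank and $c_1$ are additive invariants read off from the $K$-class, so $(3)$ contains $(1)$ with no work.

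\textbf{The arrow $(1) \Rightarrow (2)$, the crux.} This is where the real content lies, and I expect it to be the main obstacle. The idea is to pass from each of $\cE, \cE'$ to its associated exceptional vector bundle. Concretely, each exceptional sheaf produces (via \pref{th:main_conjecture;the_case_of_sheaves} when it has torsion, or trivially when it is already a bundle) a distinguished exceptional vector bundle sitting at index $-1$ in its family, namely $\cE_{-1}$ and $\cE'_{-1}$. The key point is that the slope is preserved under the twist-functor constructions that link $\cE$ to $\cE_{-1}$: the twists along $\cO_C(\cdot)$ change the class by a multiple of $[\cO_C(a)]$, which has rank zero, and the tensorings $\otimes\cO(C)$ likewise shift $c_1$ by a rank-zero correction, so the ratio $c_1/\rank$ is unaffected. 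Hence $(1)$ for $\cE, \cE'$ transfers to the equality of slopes for the bundles $\cE_{-1}, \cE'_{-1}$. Now \pref{lem:Gor_Kul} applies and yields $\cE_{-1} \cong \cE'_{-1}$. Finally, by the compatibility statement \pref{rm:compatible}, the entire families $\{\cE_i\}$ and $\{\cE'_i\}$ are reconstructed recursively from $\cE_{-1}$ and $\cE'_{-1}$ alone (the offsets $b_0, b'_0$ and the integer $s$ depend only on the restriction to $C$ of this common bundle), so $\cE_{-1} \cong \cE'_{-1}$ forces $\cE_i \cong \cE'_i$ for all $i$, giving $(2)$.

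\textbf{What needs care.} The delicate bookkeeping is checking that the slope really is invariant along the chain of twists, and confirming that the invariants $b_0$ and $s$ extracted from $\cE_{-1}|_C$ agree for the two families — but this is exactly the content of \pref{rm:compatible}, which states that the numbers $b_0$ and $b'_0$ depend only on the restriction to $C$ of $\cE_{-1}$, respectively $\cE'_{-1}$. Once $\cE_{-1} \cong \cE'_{-1}$ is known, that remark closes the loop mechanically. So the only genuinely substantive step is the slope-preservation argument feeding into \pref{lem:Gor_Kul}; everything else is assembly of results already in hand.
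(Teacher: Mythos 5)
Your overall architecture coincides with the paper's: the cycle $(2)\Rightarrow(4)\Rightarrow(3)\Rightarrow(1)\Rightarrow(2)$, with $(4)\Rightarrow(3)$ from \pref{cl:E_i_and_E_have_the_same_class}, $(3)\Rightarrow(1)$ purely numerical, and the crux $(1)\Rightarrow(2)$ passing through the bundles $\cE_{-1},\cE'_{-1}$, then \pref{lem:Gor_Kul}, then the recursive reconstruction of the families as in \pref{rm:compatible}. However, your justification of the crux step rests on a false claim: neither a single spherical twist $T_{\cO_C(a)}$ nor tensoring by $\cO(C)$ preserves the slope $c_1/\rank$. A twist changes the class by $-\chi(\cO_C(a),\beta)\,[\cO_C(a)]$; this correction indeed has rank zero, but its first Chern class is $-\chi(\cO_C(a),\beta)\,[C]$, which is nonzero in general, so the rank stays fixed while $c_1$ moves --- exactly the situation in which the ratio $c_1/\rank$ \emph{does} change. (For instance $[T_{\cO_C(a)}\cO]=[\cO]-(a+1)[\cO_C(a)]$ has slope $-(a+1)[C]\neq 0$ for $a\neq -1$, whereas $\cO$ has slope $0$.) Likewise, tensoring changes the class by a rank-zero element whose $c_1$ equals $\rank(\cF)\,[C]$, so the slope of $\cF\otimes\cO(C)$ is the slope of $\cF$ plus $[C]$. ``Rank-zero correction'' does not imply ``slope-preserving''; for that you would need the correction itself to vanish.

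What rescues the argument --- and what the paper actually does --- is that consecutive members of the family differ by a \emph{double} twist at one fixed spherical object, $\cE_i\cong T^2_{\cO_C(b_0+i)}\cE_{i+1}$, and a double twist acts trivially on $K_0(\bF_2)$ by \pref{rm:spherical_twist_on_K_0} (a twist on an even-dimensional variety acts as a reflection on $K_0$, so its square is the identity). This is exactly the content of \pref{cl:E_i_and_E_have_the_same_class}: one gets the equalities $[\cE]=[\cE_{-1}]$ and $[\cE']=[\cE'_{-1}]$ on the nose, not merely up to rank-zero corrections. With those equalities, hypothesis (1) transfers verbatim to the slopes of $\cE_{-1}$ and $\cE'_{-1}$, \pref{lem:Gor_Kul} yields $\cE_{-1}\cong\cE'_{-1}$, and the recursion of \pref{rm:compatible} propagates this to $\cE_i\cong\cE'_i$ for all $i$, as you intended. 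So the gap is repairable by replacing your step-by-step slope-invariance claim with a single citation of \pref{cl:E_i_and_E_have_the_same_class} --- the very claim you already invoke for $(4)\Rightarrow(3)$.
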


\begin{proof}
(4)
$
\Rightarrow
$
(3) easily follows from \pref{cl:E_i_and_E_have_the_same_class}.
(3)
$
\Rightarrow
$
(1)
is also obvious, since the Chern character map factors through the
$
K
$-group.

As we have seen in
Sections \ref{subsc:Excinotvec} and \ref{subsc:Excivec},
$ \cE' \cong \cE'_{i} $ always holds for some $ i $. This implies
(2)
$
\Rightarrow
$
(4).

Finally let us assume (1).
By
\pref{cl:E_i_and_E_have_the_same_class}
we obtain the equalities
$
[\cE]=[\cE_{-1}]
$
and
$
[\cE']=[\cE'_{-1}]
$.
Hence we see
$
\frac{c_1(\cE_{-1})}{\rank \cE_{-1}}
= \frac{c_1(\cE'_{-1})}{\rank \cE'_{-1}}
$. 
Since
$
\cE_{-1}
$
and
$
\cE'_{-1}
$
are vector bundles,
we obtain an isomorphism
$
\cE_{-1} \cong \cE'_{-1}
$
by \pref{lem:Gor_Kul}.
Thus we obtain the condition $(2)$. 
\end{proof}

We summarize the results of this section.

\begin{corollary}\label{cr:classification_of_equivalent_sheaves}
Let $\cE$ be an exceptional sheaf on $\bF_2$.
Then the set of isomorphism classes of exceptional sheaves
$\cE'$ with $[\cE]=[\cE'] \in K_0(\bF_2)$ is just 
$
\{
\cE_i\mid i\ge -1
\}.
$
In this set, $\cE_{-1}$ is the unique vector bundle up to isomorphism.
Furthermore, if $ i \not= j $,
$\cE_i\cong \cE_j$ occurs if and only if 
$
\{ i , j \} = \{ -1 , 0 \}
$
and
$
\cE_{-1}|_C
\cong
\cO_C (b)^{\oplus\rank \cE_{-1}}
$
for some
$b \in \bZ$. 
\end{corollary}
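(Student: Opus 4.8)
The plan is to assemble the statement from the results already proved in this section; no new argument is needed, only careful bookkeeping. First I would make sure the family $\{\cE_i\}_{i\in\bZ}$ attached to $\cE$ is unambiguously defined: when $\cE$ has nontrivial torsion we take the objects of \pref{subsc:Excinotvec}, and when $\cE$ is a vector bundle we take those of \pref{subsc:Excivec} with $\cE_{-1}=\cE$. By \pref{rm:compatible} the two recipes are mutually inverse, so in either case the family depends only on the unique exceptional vector bundle $\cE_{-1}$ in the class of $\cE$, and $\cE\cong\cE_i$ for some $i\ge -1$.

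For the first assertion I would simply invoke the equivalence $(3)\Leftrightarrow(4)$ of the preceding theorem: an exceptional sheaf $\cE'$ satisfies $[\cE']=[\cE]$ in $K_0(\bF_2)$ if and only if $\cE'\cong \cE_i$ for some $i\ge -1$. That each such $\cE_i$ genuinely belongs to the set --- i.e.\ is an exceptional sheaf with $[\cE_i]=[\cE]$ --- is exactly the first part of the structural Claim of \pref{subsc:Exciproperties} together with \pref{cl:E_i_and_E_have_the_same_class}. Hence the set of isomorphism classes in question is precisely $\{\cE_i\mid i\ge -1\}$.

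For the remaining two assertions I would read off part $(3)$ of that same Claim and \pref{rem:E_iE_j}. Part $(3)$ says that $\cE_i$ is a vector bundle exactly when $i=-1$, or when $i=0$ and $s=\rank\cE$; and \pref{rem:E_iE_j} shows $\cE_0\cong\cE_{-1}$ in the latter case. Thus $\cE_{-1}$ is, up to isomorphism, the only vector bundle in the family. For the isomorphism criterion, \pref{rem:E_iE_j} gives, for $i\ne j$, that $\cE_i\cong\cE_j$ holds if and only if $\{i,j\}=\{-1,0\}$ and $s=\rank\cE$. It then remains only to translate the numerical condition $s=\rank\cE$ into a statement about $\cE_{-1}|_C$: by part $(4)$ of the Claim we have $\cE_{-1}|_C\cong\cO_C(b_0-1)^{\oplus\rank\cE-s}\oplus\cO_C(b_0)^{\oplus s}$, and since $0<s\le\rank\cE=\rank\cE_{-1}$, this splitting is of the uniform shape $\cO_C(b)^{\oplus\rank\cE_{-1}}$ for some $b$ if and only if $\rank\cE-s=0$, i.e.\ $s=\rank\cE$ (with $b=b_0$).

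The argument uses no new ideas; the only points requiring care are that the family $\{\cE_i\}$ is canonically attached to the class (which is guaranteed by \pref{rm:compatible}) and the elementary translation between the invariant $s$ and the uniformity of the splitting type of $\cE_{-1}|_C$. The one genuinely delicate case is the coincidence $\{i,j\}=\{-1,0\}$, where the two a priori distinct vector-bundle members of the family collapse to a single isomorphism class; making the ``unique vector bundle'' statement and the isomorphism criterion mutually consistent is precisely what this degenerate case forces one to check.
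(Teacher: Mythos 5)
Your proposal is correct and takes essentially the same approach as the paper: there the corollary is introduced by ``We summarize the results of this section,'' and its implicit proof is precisely the assembly you carry out --- the equivalence $(3)\Leftrightarrow(4)$ of the preceding theorem, parts (1), (3), (4) of the Claim in Subsection \ref{subsc:Exciproperties} together with \pref{cl:E_i_and_E_have_the_same_class} and \pref{rem:E_iE_j} (transferred to the vector-bundle case via \pref{rm:compatible}), plus the elementary translation of $s=\rank\cE$ into the uniform splitting type of $\cE_{-1}|_C$. Your attention to the degenerate case $\{i,j\}=\{-1,0\}$ and to the well-definedness of the family $\{\cE_i\}$ matches exactly the points the paper relies on.
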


%
%

\section{Some results obtained via deformation to del Pezzo surfaces}

We explain a couple of results about exceptional objects on
$\bF_2$, which are obtained by using its deformation to
$\bF_0
\cong
\bP^1 \times \bP^1$.
Since
$\bF_0$
is a del Pezzo surface,
exceptional objects on it are well known
(see \cite{MR1286839} and \cite{MR2108443})).
The idea is to use that knowledge to understand
the exceptional objects on
$\bF_2$.

\subsection{Any exceptional object has, up to sign, the
same class as an exceptional bundle}

\begin{proposition}\label{pr:existence_of_sheaf_in_the_class}
Let
$
n
$
be an even non-negative integer.
Then for any exceptional object
$
\cE
$
on the Hirzebruch surface
$\bF_n$,
there exists an exceptional vector bundle
$\cF$
on
$\bF_n$
such that
\[
[\cE]=[\cF]\  \mathrm{or} \ 
-[\cF] \in K_0(\bF_n).
\] 
\end{proposition}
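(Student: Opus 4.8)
The plan is to exploit that, since $n$ is even, $\bF_n$ is deformation equivalent to the del Pezzo surface $\bF_0\cong\bP^1\times\bP^1$, on which exceptional objects are completely understood. Concretely, I would fix a smooth projective family $\pi\colon\mathcal{X}\to B$ over a smooth connected curve $B$, together with points $0,t\in B$, such that $\mathcal{X}_0\cong\bF_n$ and $\mathcal{X}_t\cong\bF_0$; such a family is standard for even $n$. The topological type of the fibres is constant along $B$, so Hirzebruch--Riemann--Roch furnishes a canonical identification $K_0(\bF_n)\cong K_0(\bF_0)$ that is an isometry for the Euler pairing $\chi(-,-)$ (the pairing is computed from the Chern character and the Todd class, both of which vary in a local system over $B$). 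Throughout I would transport classes silently along this identification, and I would use that the shift $[1]$ acts by $-1$ on $K_0$.

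The first step is to transport $\cE$ to the del Pezzo fibre. Since $\cE$ is exceptional we have $\Ext^1(\cE,\cE)=\Ext^2(\cE,\cE)=0$, so $\cE$ is rigid and unobstructed as an object of $D^b$. By the deformation theory of objects in the derived category, the relative moduli space of objects is étale over $B$ at $[\cE]$, so $\cE$ extends over an étale neighbourhood of $0$ and in particular to an object $\cE_t$ on a general fibre $\cong\bF_0$; upper semicontinuity of the $\Ext$-dimensions together with $\chi(\cE_t,\cE_t)=\chi(\cE,\cE)=1$ forces $\cE_t$ to be exceptional, with $[\cE_t]=[\cE]$. As $\bF_0$ is a del Pezzo surface with no $(-1)$-curve, \cite[Propositions 2.9 and 2.10]{MR1286839} (see also \cite{MR2108443}) show that $\cE_t\cong\mathcal{G}[k]$ for an exceptional vector bundle $\mathcal{G}$ on $\bF_0$ and some $k$, whence $[\cE]=(-1)^k[\mathcal{G}]$.

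The second step is to deform $\mathcal{G}$ back to $\bF_n$. The same rigidity and unobstructedness, now for $\mathcal{G}$, make the relative moduli space of simple sheaves étale over $B$ at $[\mathcal{G}]$; separatedness of the moduli of simple sheaves lets the corresponding section specialise to a simple sheaf $\cF$ on $\mathcal{X}_0\cong\bF_n$ with $[\cF]=[\mathcal{G}]$. As before, $\Ext^1(\cF,\cF)=0$, $\Hom(\cF,\cF)\neq0$ and $\chi(\cF,\cF)=1$ together force $\cF$ to be exceptional. Combining the two steps yields $[\cE]=(-1)^k[\cF]=\pm[\cF]$, which is the desired assertion provided $\cF$ is locally free.

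The main obstacle is exactly this last point: for $n\ge4$ the surface $\bF_n$ is no longer (weak) del Pezzo, so the results of \cite{MR1604186} are not directly available, and one must ensure that the backward-deformed sheaf is a bundle. I would settle it by the general principle that an exceptional, hence rigid and simple, torsion-free sheaf on a smooth projective surface is automatically locally free: writing $0\to\cF\to\cF^{\vee\vee}\to Q\to0$ with $\cF^{\vee\vee}$ locally free and $Q$ of finite length, a nonzero $Q$ is seen to produce nonzero classes in $\Ext^1(\cF,\cF)$, contradicting rigidity. This is the argument of \cite[Lemma 2.2.1]{MR1604186}, which uses only smoothness of the surface. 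Granting this, $\cF$ is an exceptional vector bundle on $\bF_n$ with $[\cE]=\pm[\cF]$. I expect the genuinely delicate parts to be the deformation-theoretic ones in the two steps above—that exceptional objects and simple rigid sheaves really propagate across the family and remain exceptional—whereas the lattice identification and the del Pezzo input are routine.
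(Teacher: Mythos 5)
Your global strategy---deform $\cE$ across a family from $\bF_n$ to $\bF_0$, classify there via \cite[Propositions 2.9 and 2.10]{MR1286839}, and carry the resulting exceptional bundle back to the central fibre---is the same as the paper's, and your first step is essentially the paper's \pref{cr:lifting_exceptional_objects_to_F_0}. The genuine gap is in your second step. \'Etaleness of the moduli space of simple sheaves at the point $[\mathcal{G}]$, which lies over the \emph{generic} point of the base, gives no information over the closed point: in $\Spec R$ the singleton $\{\eta\}$ is already open, so an \'etale (hence open) map can have image $\{\eta\}$ and never reach the special fibre. Moreover, separatedness---which is what you invoke---is the \emph{uniqueness} half of the valuative criterion; producing a limit sheaf on $\scX_0$ is the \emph{existence} half, i.e.\ properness, and the moduli of simple sheaves is not proper (indeed not even separated in general). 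Note the asymmetry with your first step: there the object lives on the fibre over the \emph{closed} point of the Henselian base, so rigidity plus formal deformation theory and algebraization (the paper's \pref{lm:huybrechts_thomas}) do produce an extension to the whole family; going back from the generic fibre to the special fibre is a specialization problem, which no amount of rigidity solves by itself.

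The paper fills exactly this gap with stability. Since $\cE_K$ is $(-K)$-stable by \cite[Theorem 5.2]{MR1286839}, one may choose a generic relatively ample class $H$ close to $-K$ such that $\cE_K$ is $H$-stable and no strictly $H$-semistable sheaf has the class $[\cE_K]$; Langer's theorem (\pref{lm:Langer}) then gives a \emph{projective} relative moduli space $\Mbar \to \Spec R$ of $H$-semistable sheaves, and projectivity---properness---extends the $K$-point determined by $\cE_K$ to a section $s\colon \Spec R \to \Mbar$, which lands in the stable locus by the genericity of $H$; pulling back a quasi-universal family yields the limit sheaf $\cF_0$ (up to taking $r$-th multiples). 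This also repairs a second gap in your write-up: your assertion that $\Ext^1(\cF,\cF)=0$ for the limit is unjustified, since semicontinuity only allows $\Ext$-groups to jump \emph{up} at the special fibre. The paper instead deduces exceptionality of $\cF_0$ from stability: $\Hom(\cF_0,\cF_0)=k$, $\Ext^2(\cF_0,\cF_0)\cong\Hom(\cF_0,\cF_0\otimes\omega_{\bF_n})^{\vee}=0$ by a slope comparison, and $\chi(\cF_0,\cF_0)=1$ by deformation invariance of the Euler pairing, which together force $\Ext^1(\cF_0,\cF_0)=0$. Your final observation that rigid simple torsion-free sheaves on a smooth surface are locally free is correct and agrees with the paper's citation, but it does not address the two missing ingredients above.
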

We need some preparations for the proof.
The assumption that
$
n
$
is even is used only in the proof of
\pref{cr:lifting_exceptional_objects_to_F_0}.

\begin{lemma}\label{lm:degeneration_of_hirzebruch_surfaces}
Let
$
n
$
be a non-negative integer and
$
m
=
1
$
or
$
0
$,
with the same parity as
$
n
$.
Then there exists a smooth projective morphism
\begin{equation}\label{eq:degeneration_of_hirzebruch_surfaces}
f \colon \scX \to \bA^1
\end{equation}
such that
\begin{itemize}
\item $\scX_0 \cong \bF_n$,
\item
$
f^{-1}(\bA^1\setminus \{0\}) \cong \bF_m \times (\bA^1\setminus \{0\})
 \ \mathrm{over} \ \bA^1\setminus \{0\}.
$
\end{itemize}
\end{lemma}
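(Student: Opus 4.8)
The plan is to realize the family as the projectivization of a rank-two bundle on $\bP^1\times\bA^1$ whose splitting type is unbalanced (of type $\bF_n$) over the central fiber and jumps to the balanced type (of type $\bF_m$) away from $0$. First I would dispose of the cases $n=0$ and $n=1$: here $m=n$ by the parity hypothesis, so the constant family $\bF_n\times\bA^1$ already works. Thus assume $n\ge 2$ from now on.

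Since $\bF_n\cong\bP(\cO_{\bP^1}\oplus\cO_{\bP^1}(n))$ and $\bP(-)$ is insensitive to twisting by line bundles, I would look for a deformation of $\cO\oplus\cO(n)$. The key input is a fixed class $\xi\in\Ext^1_{\bP^1}(\cO(n),\cO)\cong H^1(\bP^1,\cO(-n))$ (nonzero since $n\ge 2$) whose associated non-split extension
\[
0\to\cO\to\cE_1\to\cO(n)\to 0
\]
has the balanced splitting type $\cE_1\cong\cO(\lfloor n/2\rfloor)\oplus\cO(\lceil n/2\rceil)$. The difference of the two twists is then $\lceil n/2\rceil-\lfloor n/2\rfloor=m$, so that $\bP(\cE_1)\cong\bF_m$. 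Such a $\xi$ exists: one may either invoke that a general extension of $\cO(n)$ by $\cO$ has the generic (balanced) splitting type, or exhibit $\xi$ explicitly as the class of the extension with inclusion $(x_0^{\lceil n/2\rceil},x_1^{\lfloor n/2\rfloor})\colon\cO\hookrightarrow\cO(\lceil n/2\rceil)\oplus\cO(\lfloor n/2\rfloor)$, whose cokernel is locally free and isomorphic to $\cO(n)$ because the two sections have no common zero.

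Next I would globalize over $\bA^1$. Writing $p\colon\bP^1\times\bA^1\to\bP^1$ for the projection and $\cO(n):=p^*\cO_{\bP^1}(n)$, the Künneth isomorphism $\Ext^1_{\bP^1\times\bA^1}(\cO(n),\cO)\cong\Ext^1_{\bP^1}(\cO(n),\cO)\otimes_k k[t]$ lets me take the extension
\[
0\to\cO\to\cE\to\cO(n)\to 0
\]
on $\bP^1\times\bA^1$ whose class is $t\cdot\xi$. The middle term $\cE$ is locally free of rank two, and since restriction of Ext-classes to $\bP^1\times\{t_0\}$ is the evaluation $t\mapsto t_0$, the bundle $\cE|_{\bP^1\times\{t_0\}}$ is the extension with class $t_0\xi$: split for $t_0=0$ and isomorphic to $\cE_1$ for $t_0\neq 0$. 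Setting $\scX:=\bP(\cE)$ and letting $f$ be the composite $\scX\to\bP^1\times\bA^1\to\bA^1$, the morphism $f$ is smooth and projective, being a $\bP^1$-bundle over the smooth surface $\bP^1\times\bA^1$ followed by the smooth projection. Since projectivization commutes with base change, $\scX_0\cong\bP(\cO\oplus\cO(n))\cong\bF_n$ and $\scX_{t_0}\cong\bP(\cE_1)\cong\bF_m$ for $t_0\neq0$.

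Finally, for the triviality over $\bA^1\setminus\{0\}$, I would observe that on the punctured line $t$ is a unit, so multiplication by $t$ on $\cO(n)$ is an automorphism carrying the constant extension class $\xi$ to $t\xi$. Hence $\cE|_{\bP^1\times(\bA^1\setminus\{0\})}$ is isomorphic, up to a line bundle pulled back from $\bA^1\setminus\{0\}$, to $p^*\cE_1$; passing to $\bP(-)$ absorbs the pulled-back twist and yields $f^{-1}(\bA^1\setminus\{0\})\cong\bF_m\times(\bA^1\setminus\{0\})$ over $\bA^1\setminus\{0\}$. The one point requiring genuine care is the choice of $\xi$ realizing the balanced type — equivalently, that the generic fiber is exactly $\bF_m$ rather than some intermediate $\bF_{n'}$ with $m<n'<n$; once that is pinned down, the remaining verifications are formal.
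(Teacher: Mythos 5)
Your construction is correct and complete. It is worth pointing out, however, that the paper does not actually prove this lemma: its ``proof'' is a one-line citation of a classical example (Example 2.16 of the reference cited there), together with the remark that the construction given there works in arbitrary characteristic. Your argument supplies precisely the content behind that citation, in purely algebraic form: you realize the family as $\bP(\cE)$ for an extension $0\to\cO\to\cE\to p^*\cO_{\bP^1}(n)\to 0$ on $\bP^1\times\bA^1$ whose class is $t\cdot\xi$, where $\xi$ is chosen --- explicitly, via the pair of sections $x_0^{\lceil n/2\rceil},x_1^{\lfloor n/2\rfloor}$ with no common zero --- so that the corresponding extension over $\bP^1$ has balanced splitting type. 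The classical example behind the citation is traditionally presented complex-analytically, by deforming transition functions of the ruled surface; your $\Ext$-theoretic family is the standard algebraic counterpart, and it has the concrete advantage of making the characteristic-independence (which the paper merely asserts) self-evident, since your construction is defined over the prime field. The two points you rightly isolate as needing verification --- that restricting the class $t\cdot\xi$ to the fiber over $t_0$ is evaluation at $t_0$ (valid because the quotient $p^*\cO_{\bP^1}(n)$ is locally free, so the extension stays exact on fibers), and that scaling an extension class by a unit does not change the isomorphism type of the middle term (which gives triviality of the family over $\bA^1\setminus\{0\}$, with $\bP(-)$ absorbing any pulled-back line bundle) --- are indeed the only places where an argument is required, and both are handled correctly.
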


\begin{proof}
See \cite[Example 2.16]{MR815922}.
we can see that the construction works in any characteristics.
\end{proof}

\begin{lemma}\label{lm:huybrechts_thomas}
Let
\[
\scX_R \to \Spec R
\]
be a smooth projective (for simplicity) morphism over the spectrum of a
complete discrete valuation ring
$R$.
Let
$\scX_0$
be the central fiber and take an object
$\cE_0 \in D(\scX_0)$.

\begin{itemize}
\item
When
$
\Ext^2_{\scX_0}(\cE_0, \cE_0)=0
$,
$\cE_0$
extends to a bounded complex of coherent sheaves on
$\scX_R$.

\item
When
$\Ext^1_{\scX_0}(\cE_0, \cE_0)=0$,
the extension of
$\cE_0$
to
$\scX$,
if exists, is unique.
\end{itemize}
\end{lemma}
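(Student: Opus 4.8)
The plan is to combine the infinitesimal deformation theory of objects in the derived category with an algebraization argument that exploits the completeness of $R$. Write $\mathfrak{m}$ for the maximal ideal of $R$, set $R_n := R/\mathfrak{m}^{n+1}$, and let $\scX_n := \scX_R \times_R R_n$, so that $\scX_0$ is the central fibre. The idea is to lift $\cE_0$ successively to objects $\cE_n \in D(\scX_n)$, compatible under the derived restriction functors $D(\scX_{n+1}) \to D(\scX_n)$, thereby producing a formal object along $\scX_0$, and then to algebraize it to a genuine bounded complex of coherent sheaves on $\scX_R$.

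The heart of the matter is the obstruction theory controlling the step from $\cE_n$ to $\cE_{n+1}$. Following the deformation--obstruction theory of Huybrechts and Thomas, the obstruction to extending $\cE_n$ across the square-zero thickening $\scX_n \hookrightarrow \scX_{n+1}$ is a class
\[
o(\cE_n) \in \Ext^2_{\scX_0}(\cE_0, \cE_0) \otimes_k (\mathfrak{m}^{n+1}/\mathfrak{m}^{n+2}),
\]
built from the Atiyah class of $\cE_n$ paired with the Kodaira--Spencer class of the thickening. When $\Ext^2_{\scX_0}(\cE_0, \cE_0) = 0$ this obstruction vanishes at every stage, so each $\cE_n$ lifts and we obtain a compatible system $\{\cE_n\}_{n \ge 0}$.

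For the algebraization step I would use that $\scX_0$ is smooth, so $\cE_0$ is perfect and may be represented by a bounded complex of locally free sheaves; the compatible system $\{\cE_n\}$ then determines a formal complex on the completion of $\scX_R$ along $\scX_0$. Since $R$ is a complete discrete valuation ring and $f$ is projective, the formal-existence (Grothendieck existence) theorem, in its form for perfect complexes, yields a bounded complex of coherent sheaves on $\scX_R$ whose restriction to the central fibre is $\cE_0$. This is exactly the point at which completeness of $R$ is used.

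The uniqueness statement is dual to the existence argument. Whenever $\cE_n$ admits a lift, the set of all lifts to $\scX_{n+1}$ is a torsor under $\Ext^1_{\scX_0}(\cE_0, \cE_0) \otimes_k (\mathfrak{m}^{n+1}/\mathfrak{m}^{n+2})$; hence the hypothesis $\Ext^1_{\scX_0}(\cE_0, \cE_0) = 0$ forces the lift at each stage to be unique, so the formal object, and therefore its algebraization, is unique. The main obstacle in carrying this out rigorously is the derived-categorical deformation theory itself: one must develop the Atiyah and obstruction classes for an object of $D(\scX_n)$ rather than for a single coherent sheaf, and one must control the passage between an object of the derived category and an honest bounded complex of coherent sheaves during algebraization. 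Both of these are precisely the technical inputs that the Huybrechts--Thomas framework provides.
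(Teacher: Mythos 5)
Your proposal is correct and follows essentially the same route as the paper: the paper's proof simply cites \cite[Corollary 3.4]{MR2578562} (the Huybrechts--Thomas deformation--obstruction theory for complexes, which is exactly the inductive lifting over the thickenings $\scX_n$ with obstruction in $\Ext^2$ and torsor structure under $\Ext^1$ that you describe) for both the formal existence and the uniqueness, and then \cite[Theorem 8.4.2]{MR2222646} (Grothendieck existence) for the algebraization step, just as you do. The only difference is one of exposition: you unpack the content of the cited results, while the paper treats them as black boxes.
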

\begin{proof}
By \cite[Corollary 3.4]{MR2578562}, we see that
the vanishing of
$\Ext^2_{\scX_0}(\cE_0, \cE_0)$
implies that $\cE_0$ can be extended to a perfect complex
on the formal neighborhood of
$\scX_0$.
By the algebraization \cite[Theorem 8.4.2]{MR2222646},
we can extend the perfect complex to
$\scX$.
For the uniqueness, see again \cite[Corollary 3.4]{MR2578562}.
\end{proof}

Let
$
R
$
be the completion of the local ring
$\cO_{0, \bA^1}$
by the maximal ideal,
and
$
K
$
the field of fractions of
$
R
$.
Let
\begin{equation}\label{eq:degenaration_of_Hirzebruch_surface_after_completion}
\scX_R \to \Spec R
\end{equation}
be the base change of the morphism
\pref{eq:degeneration_of_hirzebruch_surfaces} by
the natural morphism $\Spec R \to \bA^1$.

\begin{corollary}\label{cr:lifting_exceptional_objects_to_F_0}
Assume that
$
n
$
is an even non-negative integer. Then any exceptional object
$\cE_0$
on
$\scX_0\cong \bF_n$
uniquely deforms to a family of exceptional objects
$\cE$
on
$\scX_R$
over
$R$.
Moreover, the restriction 
$\cE_K:=\cE|_{\scX_K}$
to the generic fiber is isomorphic to a shift of an exceptional vector bundle.
\end{corollary}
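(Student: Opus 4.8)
The plan is to feed the exceptionality of $\cE_0$ into the deformation-theoretic machinery of \pref{lm:huybrechts_thomas}, then identify the generic fibre by transporting the known classification of exceptional objects on $\bP^1\times\bP^1$ across the base change to the algebraic closure of $K$. Since $\cE_0$ is exceptional we have in particular $\Ext^1_{\scX_0}(\cE_0,\cE_0)=\Ext^2_{\scX_0}(\cE_0,\cE_0)=0$. The vanishing of $\Ext^2$ lets \pref{lm:huybrechts_thomas} produce an extension $\cE$ of $\cE_0$ to a bounded complex of coherent sheaves on $\scX_R$, and the vanishing of $\Ext^1$ makes this extension unique. This takes care of the existence and uniqueness of the deforming family.

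Next I would verify that the generic fibre $\cE_K$ remains exceptional, by a semicontinuity argument on $\Spec R$, which has only the closed point $0$ and the generic point $\eta$. For each $i$ the function $\dim_{\kappa(p)}\Ext^i_{\scX_{\kappa(p)}}(\cE_p,\cE_p)$ is upper semicontinuous in $p$, so comparing the two points gives
\[
\dim_K\Ext^i_{\scX_K}(\cE_K,\cE_K)\le \dim_k\Ext^i_{\scX_0}(\cE_0,\cE_0).
\]
For $i\neq 0$ the right-hand side vanishes, while for $i=0$ it equals $1$; since $\cE_K\neq 0$ carries its identity endomorphism, the left-hand side for $i=0$ is exactly $1$. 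Hence $\cE_K$ is exceptional.

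For the final assertion I would pass to the algebraic closure $\overline{K}$. By \pref{lm:degeneration_of_hirzebruch_surfaces} we may take $m=0$ precisely because $n$ is even (this is where the parity hypothesis enters), so the generic fibre is $\scX_K\cong\bF_0\times_k K$, whence $\scX_{\overline{K}}\cong\bP^1\times\bP^1$ over the algebraically closed field $\overline{K}$. Flat base change shows $\cE_{\overline{K}}:=\cE_K\otimes_K\overline{K}$ is again exceptional. Since $\bP^1\times\bP^1$ contains no $(-1)$-curve, the classification of \cite[Propositions 2.9 and 2.10]{MR1286839} forces $\cE_{\overline{K}}$ to be a shift $\mathcal{V}[j]$ of an exceptional vector bundle $\mathcal{V}$ on $\scX_{\overline{K}}$.

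It remains to descend this description from $\overline{K}$ back to $K$, and I expect this to be the main obstacle, since the classification is only available over an algebraically closed field. The tool is the faithful flatness of $\overline{K}/K$: cohomology sheaves commute with this base change, so $\scH^i(\cE_K)\otimes_K\overline{K}=\scH^i(\cE_{\overline{K}})$ vanishes for $i\neq -j$, forcing $\cE_K\cong\scH^{-j}(\cE_K)[j]$; local freeness of the single surviving cohomology sheaf descends along the faithfully flat map; and the dimensions of the relevant $\Ext$ groups are preserved, so that sheaf is an exceptional vector bundle on $\scX_K$. This exhibits $\cE_K$ as a shift of an exceptional vector bundle, completing the argument.
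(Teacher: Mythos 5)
Your proposal is correct and follows essentially the same route as the paper's proof: existence and uniqueness of the family come from \pref{lm:huybrechts_thomas} via the vanishing of $\Ext^2$ and $\Ext^1$, and the generic fibre is identified by passing to the algebraic closure and invoking \cite[Propositions 2.9 and 2.10]{MR1286839}, with the parity of $n$ used exactly as in the paper to ensure the generic fibre is $\bF_0$, which has no $(-1)$-curves. The semicontinuity argument and the faithfully flat descent from $\overline{K}$ to $K$ that you spell out are left implicit in the paper, but they are precisely the intended justifications.
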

\begin{proof}
Although
$K$
is not algebraically closed,
by passing to the algebraic closure,
we can check that
any exceptional object on $\scX_K$
is isomorphic to a shift of a vector bundle
by applying \cite[Propositions 2.9 and 2.10]{MR1286839}.
Note that this is not the case for the Hirzebruch surface
$\bF_1$, since line bundles on the
$(-1)$-curve are also exceptional; this is why
we assumed that
$
n
$
is even. 

The rest is a direct consequence of
\pref{lm:huybrechts_thomas}.
\end{proof}

Next we recall a fact on the relative moduli space of semi-stable sheaves.
\begin{lemma}[{$=$\cite[Theorem 4.1]{MR2085175}}]\label{lm:Langer}
Let
$
\scY
\to 
B
$
be a projective morphism of schemes of finite type over
a universally Japanese ring, with a relatively ample line bundle
$
L
$
on
$
\scY
$.
Fix an integer-valued polynomial
$
P(t)
\in
\bQ[t]
$.
Then
there exists a projective scheme
$
\Mbar
\to
B
$
such that for any
$
b
\in
B
$, the fiber
$
\Mbar_b
$
is the moduli scheme of
$L_b$-semi-stable
sheaves on
$\scY_b$
with the Hilbert polynomial
$P(t)$.
\end{lemma}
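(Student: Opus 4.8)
The plan is to construct $\Mbar$ as a GIT quotient of an open subscheme of a relative Quot scheme, following the Gieseker--Maruyama--Simpson strategy as adapted by Langer to an arbitrary universally Japanese base. The two essential inputs are a \emph{boundedness} result---guaranteeing that the semistable sheaves in question form a bounded family, so that after a suitable twist they are all parametrized inside a single Quot scheme---and a \emph{comparison} between Gieseker semistability of sheaves and GIT semistability of the corresponding points, which is what makes the quotient projective over $B$.

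First I would establish boundedness. The family of $L_b$-semistable sheaves on the fibers $\scY_b$ with Hilbert polynomial $P(t)$ is bounded; in positive and mixed characteristic this is the delicate point, and it rests on Langer's effective restriction and regularity estimates, which replace the Grauert--M\"ulich argument available in characteristic zero. Boundedness yields an integer $m$, independent of both $b$ and the sheaf, such that every such $F$ is $m$-regular, so that $F(m)$ is globally generated with $H^{>0}(\scY_b, F(m)) = 0$ and $h^0 = P(m)$. Fixing a vector space $V$ of dimension $P(m)$, each semistable $F$ then arises as a quotient $V \otimes L^{-m} \twoheadrightarrow F$, i.e.\ as a point of the relative Quot scheme parametrizing quotients of $V \otimes L^{-m}$ with Hilbert polynomial $P(t)$ over $B$, which is projective over $B$ by Grothendieck's existence theorem.

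Next I would carry out the GIT step relatively over $B$. Following Simpson, I would embed the relevant locus of the Quot scheme into a relative Grassmannian via the map sending a quotient to the induced surjection on global sections after a further twist by some large $\ell$, and linearize the natural $\mathrm{SL}(V)$-action using the Pl\"ucker polarization. The heart of the matter is the numerical comparison: for $\ell \gg 0$, a point is GIT-semistable precisely when the corresponding sheaf is Gieseker $L_b$-semistable with Hilbert polynomial $P$ and the evaluation $V \to H^0(F(m))$ is an isomorphism. Taking the GIT quotient of this semistable locus---and using that the formation of the quotient is compatible with base change on $B$, so that the fibers of the resulting space are exactly the fiberwise moduli schemes---produces the desired projective morphism $\Mbar \to B$.

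The main obstacle is boundedness in arbitrary characteristic: the classical proof uses the restriction theorem together with the Grauert--M\"ulich bound on the Harder--Narasimhan slopes of restrictions to general curves, both of which fail or require substantial modification when Frobenius can destabilize. Langer's resolution---uniform bounds on $\mu_{\max}$ via his characteristic-free Bogomolov-type inequality---is precisely the content that makes the statement valid over a universally Japanese ring, and it is the step I would expect to consume most of the effort; the remaining GIT comparison is then a relative version of the now-standard Gieseker--Simpson argument.
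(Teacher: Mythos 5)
There is nothing in the paper to compare your argument against: as the bracketed annotation in the statement indicates, this lemma is quoted verbatim as Theorem 4.1 of Langer \cite{MR2085175} and is used in the paper purely as a black box, with no proof given. What you have written is therefore a reconstruction of the strategy of Langer's own proof, and as an outline it is broadly faithful to it: the genuinely new ingredient in positive and mixed characteristic is boundedness of the family of semistable sheaves with fixed Hilbert polynomial, which Langer derives from his restriction theorem and characteristic-free Bogomolov-type inequality, and once boundedness is available the construction proceeds through the relative Quot scheme and a Simpson-style GIT comparison, as in the Gieseker--Maruyama--Simpson theory.

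Two points, however, need more care than your sketch allows. First, the GIT step is \emph{not} ``the now-standard argument'': over a universally Japanese base ring in positive characteristic one cannot invoke linear reductivity, and one needs geometric reductivity (Haboush's theorem) together with Seshadri's extension of geometric invariant theory to such base rings to conclude that the quotient exists and is projective over $B$. Second, the assertion about the fibers $\Mbar_b$ is not a formal consequence of ``compatibility of the quotient with base change'': taking invariants does not commute with arbitrary base change in positive characteristic, and passing to a fiber over $b \in B$ is not a flat base change. The correct statement, which is part of what Langer proves, is that $\Mbar$ \emph{uniformly} corepresents the relevant moduli functor (compatibility with flat base change), and the identification of the fibers as the fiberwise moduli schemes requires a separate argument following Maruyama and Simpson. (A minor slip: projectivity of the Quot scheme is Grothendieck's theorem on Quot schemes, not ``Grothendieck's existence theorem,'' which refers to formal GAGA.) In short, your outline is an acceptable summary of the literature, but it defers every substantive ingredient---boundedness, the GIT comparison in arbitrary characteristic, the fiberwise identification---to precisely the theorem being cited; for the purposes of this paper the honest ``proof'' is the citation itself.
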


\begin{lemma}\label{lm:existence_of_degeneration}
Fix a non-negative integer
$
n
$.
Let
\[
\scX_R \to \Spec R
\]
be the morphism
\pref{eq:degenaration_of_Hirzebruch_surface_after_completion}
and
$\cE_K$
an exceptional vector bundle on the generic fiber
$\scX_K$.
Then there exists an exceptional vector bundle
$\cF_0$
on the central fiber
$\scX_0$
such that
\begin{equation}\label{eq:classes_coincides}
[\cE_K]
=
[\cF_0]
\end{equation}
under the canonical isomorphism
$
K_0((\bF_n)_K)
\cong
K_0(\bF_m)
$.
\end{lemma}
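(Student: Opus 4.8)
The plan is to produce the desired bundle $\cF_0$ on the central fiber $\scX_0 \cong \bF_n$ by deforming a semistable representative of (a twist of) $\cE_K$ across the family and then identifying the limit. The starting observation is that the statement is purely about classes in $K_0$, so I am free to tensor $\cE_K$ by any line bundle pulled back from $\scX_R$ without changing the difficulty of the problem (this only shifts both sides of \pref{eq:classes_coincides} by the same class). Hence I would first normalize $\cE_K$ so that it becomes $L_K$-stable for the chosen relative polarization $L$ on $\scX_R$; since $\cE_K$ is an exceptional vector bundle on a del Pezzo surface $\scX_K \cong \bF_m$, it is $(-K)$-stable (cf.\ \cite[Corollary 2.2.9]{MR1604186}, as used in \pref{lem:Gor_Kul}), and after twisting I may assume it is a genuine stable bundle with a fixed Hilbert polynomial $P(t)$.

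First I would invoke \pref{lm:Langer} with $\scY = \scX_R$, $B = \Spec R$, and the Hilbert polynomial $P(t)$ of $\cE_K$, producing a projective relative moduli scheme $\Mbar \to \Spec R$ whose generic fiber contains the point $[\cE_K]$. Next I would use the valuative criterion of properness for $\Mbar \to \Spec R$: since $R$ is a complete discrete valuation ring, the $K$-point $[\cE_K]$ extends uniquely to an $R$-point, whose special value is a point $[\cG_0] \in \Mbar_0$ corresponding to an $L_0$-semistable sheaf $\cG_0$ on $\scX_0 \cong \bF_n$ with the same Hilbert polynomial, hence the same class $[\cG_0] = [\cE_K]$ in $K_0$ under the canonical identification. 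This limit sheaf $\cG_0$ is the natural candidate, but it need \emph{not} be exceptional or even a vector bundle, so the remaining work is to upgrade it.

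The final step is to replace the possibly reducible/non-exceptional limit $\cG_0$ by an honest exceptional vector bundle sharing its class. Since $[\cG_0]=[\cE_K]$ and the exceptional bundle $\cE_K$ has $\chi(\cE_K,\cE_K)=1$, the class $[\cG_0]\in K_0(\bF_n)$ is the class of an exceptional object; I would then appeal to the constructive theory of exceptional bundles on $\bF_n$ (the $(-K)$-stability and the slope-determines-the-bundle results from \cite{MR1604186}, in the spirit of \pref{lem:Gor_Kul}) to produce the unique $(-K)$-stable exceptional bundle $\cF_0$ with $c_1(\cF_0)/\rank\cF_0 = c_1(\cG_0)/\rank\cG_0$, which by additivity of the Chern character satisfies $[\cF_0]=[\cG_0]=[\cE_K]$. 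Undoing the initial normalizing twist gives the bundle required in the statement.

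The hard part will be the passage from $\cG_0$ to $\cF_0$: the flat limit of a stable bundle is only guaranteed to be a semistable sheaf, so it may acquire torsion or fail to be locally free along the $(-2)$-curve $C \subset \bF_n$, exactly the phenomenon analyzed in \pref{th:main_conjecture;the_case_of_sheaves}. I expect the cleanest route is not to analyze $\cG_0$ directly but to argue at the level of $K_0$: the class $[\cE_K]$ transported to $K_0(\bF_n)$ is the class of \emph{some} exceptional object, and one shows that any such class is realized by an exceptional vector bundle using the classification of exceptional bundles on $\bF_n$. The role of \pref{lm:existence_of_degeneration} is precisely to transport the class, while the realizability by a bundle is where the structural results of \cite{MR1604186} do the real work.
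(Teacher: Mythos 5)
Your opening steps (applying \pref{lm:Langer} to $\scX_R \to \Spec R$ and then the valuative criterion of properness to extend the $K$-point of $\Mbar$ to an $R$-point) follow the paper, but the proof collapses exactly where you decide not to analyze the limit sheaf and instead to ``upgrade'' it at the level of $K_0$. Your final step appeals to a ``constructive theory of exceptional bundles on $\bF_n$'' to produce an exceptional bundle with prescribed slope or class. No such existence theorem is available to you: \pref{lem:Gor_Kul} and \cite[Corollary 2.5]{MR966982} are \emph{uniqueness} statements (two exceptional bundles with the same slope are isomorphic); they say nothing about which slopes or $K_0$-classes are actually realized by exceptional bundles. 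Realizing the class $[\cE_K]$ by an exceptional vector bundle on the central fiber is precisely what \pref{lm:existence_of_degeneration} asserts --- it is the key existence input to \pref{pr:existence_of_sheaf_in_the_class} --- so the sentence ``one shows that any such class is realized by an exceptional vector bundle using the classification'' is circular: it is the statement to be proven, not a tool. (Note also that the results of \cite{MR1604186} you invoke concern surfaces with nef and big anticanonical class, so they could not cover general even $n$ in any case.)

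The paper closes this gap by showing the limit sheaf \emph{itself} is the desired bundle, and the two ingredients you are missing are exactly the ones that make this work. First, the polarization $H$ is chosen not only close to $-K$ (so that $\cE_K$ is $H$-stable, by \cite[Theorem 5.2]{MR1286839}) but also generically, using local finiteness of walls, so that no strictly $H$-semistable sheaf on $\bF_n$ has class $[\cE_K]$; consequently the section $s\colon \Spec R \to \Mbar$ lands in the stable locus, and the limit $\cF_0$ is stable, hence simple, with ${}^2(\cF_0,\cF_0)=0$ by Serre duality and stability. Second, deformation invariance of the Euler pairing gives $\chi(\cF_0,\cF_0)=\chi(\cE_K,\cE_K)=1$, whence ${}^1(\cF_0,\cF_0)=0$; a rigid torsion-free sheaf on a surface is locally free, so $\cF_0$ is already exceptional and locally free --- no ``upgrade'' is needed. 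Two further technical slips: tensoring $\cE_K$ by a line bundle does not change its stability with respect to any fixed polarization, so your normalization step accomplishes nothing (the real issue is the choice of $H$, which you skip); and an $R$-point of the \emph{coarse} moduli space does not by itself yield a flat family on $\scX_R$, nor does equality of Hilbert polynomials give equality in $K_0$ (and a strictly semistable limit is only well defined up to S-equivalence). The paper needs the section to factor through the stable locus precisely so it can pull back the quasi-universal family, obtaining a flat family $\cF'$ with $\cF'_K\cong\cE_K^{\oplus r}$ and $\cF'_0\cong\cF_0^{\oplus r}$, from which $[\cE_K]=\frac{1}{r}[\cF'_0]=[\cF_0]$ follows since $K_0$ is torsion free.
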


\begin{proof}
Note first that there exists a canonical isomorphism
\[
\Pic((\bF_m)_K)\cong\Pic(\bF_n),
\]
so that we can identify the $\bR$-divisors
on
$
(\bF_n)_K
$
with those on
$
\bF_m
$.

Note second that
$\cE_K$
is
$(-K_{(\bF_m)_K})$-stable (see \cite[Theorem 5.2]{MR1286839}).
Take an ample divisor
$H \in \Pic (\bF_n)\otimes \bR \cong \Pic ((\bF_m)_K) \otimes \bR$
which is sufficiently close to
$-K_{(\bF_m)_K}$
so that
$\cE_K$
is
$H$-stable.
Moreover
by the local finiteness of the walls
(see \cite[Section 4C]{Huybrechts-Lehn}), we can choose
$H$ generically so that there exists no $H$-strictly semi-stable sheaf
$\scG \in \Coh(\bF_n)$
with
$
[\scG]=[\cE_K]
$;
here the equality should be considered under the canonical isomorphism
$K_0(\bF_n)\cong K_0((\bF_m)_K)$.

Consider the relative moduli space of
$H$-semi-stable sheaves
\[
\Mbar \to \Spec R
\]
obtained by applying \pref{lm:Langer} to \pref{eq:degenaration_of_Hirzebruch_surface_after_completion}
for
$\scY=\scX_R$
and
$B=\Spec R$.
Since
$R$
is a DVR,
we can find a section
$s\colon \Spec R \to \Mbar$
such that
$
s(\Spec K)
$
represents the sheaf
$
\cE_K
$.
By the genericity of the polarization
$H$,
it holds that the section
$s$
factors through the stable locus
$
M
\subset
\Mbar
$.
Hence, pulling back by
$
s
\times
\id_{\scX_R}
$ the quasi-universal family on
$
M
$,
we obtain a flat family of coherent sheaves
$\cF'$
on
$
\scX_R
$
such that
$\cF'_K
\cong
\cE_K^{\oplus r}
$
for some
$r>0$
(see \cite[Section 4.6]{Huybrechts-Lehn} for the notion of
quasi-universal families).
If we denote by
$\cF_0$
the stable sheaf represented by the point
$s(0)
\in
\Mbar_{0}
$,
then by the definition of the quasi-universal family,
we see
$\cF'_0
\cong
\cF_0^{\oplus r}$.

Therefore we see that
$\cF_0$
is torsion free,
${}^0(\cF_0,\cF_0)=1$,
and
${}^2(\cF_0,\cF_0)={}^0(\cF_0,\cF_0\otimes \omega_{\bF_n})=0$.
Moreover, note that
\[
\begin{split}
\chi(\cF_0,\cF_0) =\frac{1}{r^2}\chi(\cF'_0,\cF'_0)
=\frac{1}{r^2}\chi(\cF'_K,\cF'_K)\\
=\chi(\cE_K,\cE_K)=1
\end{split}
\]
due to the deformation invariance of
the Euler pairing. This implies
\[
{}^1(\cF_0, \cF_0)
=
-\chi(\cF_0,\cF_0)+{}^0(\cF_0,\cF_0)+{}^2(\cF_0,\cF_0)
=
-1+1+0
=0.
\]
Therefore
$\cF_0$
is an exceptional torsion free sheaf. By standard arguments (see \cite[Corollary 2.3]{MR1286839}),
$\cF_0$ has to be a vector bundle.
Finally the sequence of equalities
\[
[\cE_K]
=
\frac{1}{r}
[\cF'_K]
=
\frac{1}{r}
[\cF'_0]
=
[\cF_0]
\]
imply
\pref{eq:classes_coincides},
since
$
K_0((\bF_n)_K)
\cong
K_0(\bF_m)
$
is torsion free.
\end{proof}

Finally,
\pref{pr:existence_of_sheaf_in_the_class}
directly follows from
\pref{cr:lifting_exceptional_objects_to_F_0}
and
\pref{lm:existence_of_degeneration}.

\begin{remark}\label{rm:action_of_double_spherical_twists}
It is interesting to describe the set of exceptional objects
sharing the class in
$K_0(\bF_n)$.
When
$n=2$,
because of
\pref{rm:spherical_twist_on_K_0},
the nontrivial group generated by `double spherical twists'
\begin{equation*}\label{eq:group_of_double_spherical_twists}
D:=\langle T_{\alpha}^2 \ | \ \alpha\colon\mathrm{spherical} \rangle < \Auteq{D(\bF_2)}
\end{equation*}
acts trivially on
$K_0(\bF_2)$.

Note that the action of
$D$
is not free: for example
$
T_{\cO_C(-1)}\cO=\cO,
$
since
$
\RHom(\cO_C(-1), \cO)=0.
$
The authors are not sure if the action is transitive
on each of such sets or not.
\end{remark}

%
%

\subsection{Numerical transitivity}
In this section we consider
$\bF_n$
for any
$n \ge 0$.
Let
$B_4$
be the braid group with four strings.
It is well known (see \cite[Proposition 2.1]{MR1230966}) that the group
$G_4:=\bZ^4 \rtimes B_4$ acts on the set of
full exceptional collections of a triangulated category
$\scrT$ with
$K_0(\scrT)\cong\bZ^4$, via right and left mutations and the
shifts.
The action descends to
$
K_0(\scrT)
$
(see \cite[Section 1.5]{MR2108443}).
Below is a special case of
\cite[p.216, Corollary]{MR1073089}.

\begin{proposition}
Let
\begin{equation}\label{eq:exceptional_collection_E_1_E_2_E_3_E_4}
\cE_1, \cE_2, \cE_3, \cE_4
\end{equation}
be a full exceptional collection of
$D(\bF_n)$.
Then for any full exceptional collection of line bundles
\begin{equation}\label{eq:standard_collection}
S_1 , S_2 , S_3 , S_4,
\end{equation}
there exists an element
$g \in G_4$
such that the classes of the members of the mutated collection
\[
g(\cE_1, \cE_2, \cE_3, \cE_4)
\]
in
$K_0(\bF_n)$
coincide with those of the chosen collection
\pref{eq:standard_collection}.
\end{proposition}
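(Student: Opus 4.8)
The plan is to work entirely at the level of $K_0(\bF_n) \cong \bZ^4$ and exploit the fact that the $G_4$-action on full exceptional collections descends to a well-understood action on the Grothendieck group. Since both the given collection \pref{eq:exceptional_collection_E_1_E_2_E_3_E_4} and the standard collection of line bundles \pref{eq:standard_collection} are full exceptional collections, their classes each form a basis of $K_0(\bF_n)$; moreover, because the Euler pairing is preserved and these are exceptional collections, each basis is \emph{semiorthonormal} with respect to $\chi(-,-)$, meaning the Gram matrix $\left(\chi(\cE_i,\cE_j)\right)_{i,j}$ is upper-triangular with $1$'s on the diagonal. The essential point is that mutations act on $K_0$ precisely by the standard change-of-basis operations that preserve this semiorthonormal structure, and the shifts by $\bZ^4$ allow sign changes on each basis vector.

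First I would recall the explicit description of how $G_4 = \bZ^4 \rtimes B_4$ acts on $K_0$: the braid generators act by the reflection-type formulas coming from left and right mutation (so that a braid generator sends a pair $([\cE_i],[\cE_{i+1}])$ to either $([\cE_{i+1}], [\cE_i] - \chi(\cE_i,\cE_{i+1})[\cE_{i+1}])$ or the transposed version), while the $\bZ^4$ factor acts by shifting individual members, which on $K_0$ negates the corresponding class. The key structural fact, which is exactly the content of the cited result \cite[p.216, Corollary]{MR1073089}, is that the group generated by these operations acts \emph{transitively} on the set of all semiorthonormal bases of a fixed unimodular lattice equipped with the bilinear form $\chi$. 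Thus the abstract statement reduces to verifying that both collections give semiorthonormal bases for the same pairing.

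The second step is therefore to check that the classes of \pref{eq:exceptional_collection_E_1_E_2_E_3_E_4} and \pref{eq:standard_collection} are both semiorthonormal bases of $K_0(\bF_n)$ with respect to the Euler form. This is immediate: for any full exceptional collection $\cE_1,\dots,\cE_4$ one has $\chi(\cE_i,\cE_i)=1$ (exceptionality) and $\chi(\cE_j,\cE_i)=0$ for $j>i$ (the exceptional/semiorthogonality condition forces $\Ext^\bullet(\cE_j,\cE_i)=0$ when $j>i$), so the Gram matrix is unipotent upper-triangular. Since $G_4$ acts through the group generated by transvections and sign changes preserving this form, and since \cite[p.216, Corollary]{MR1073089} guarantees transitivity on semiorthonormal bases, there exists $g \in G_4$ carrying $([\cE_1],\dots,[\cE_4])$ to $([S_1],\dots,[S_4])$. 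This $g$ is the desired element.

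The main obstacle I anticipate is purely bookkeeping rather than conceptual: one must be careful that the action of $G_4$ on $K_0$ \emph{exactly} matches the group-theoretic statement about semiorthonormal bases in \cite{MR1073089}, including the correct interaction between the braid relations and the sign changes coming from the $\bZ^4$-factor. In particular, the cited result is stated for a specific normalization of the bilinear form, and one needs to confirm that the Euler pairing on $K_0(\bF_n)$ with a full exceptional collection of line bundles fits that normalization (this is where fullness, and the rank-$4$ hypothesis $K_0 \cong \bZ^4$, genuinely enter). Once the dictionary between mutations-on-$K_0$ and the lattice-theoretic transvection action is pinned down, the proposition follows directly by invoking the cited corollary, with no surface-specific input about $\bF_n$ beyond the existence of a full exceptional collection and the shape of its Gram matrix.
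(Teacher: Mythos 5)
Your proposal takes a genuinely different route from the paper, and as written it has a real gap. The paper's proof is geometric rather than lattice-theoretic: it uses the degeneration constructed earlier in the section to produce an Euler-pairing-preserving isomorphism $\varphi\colon K_0(\bF_n)\cong K_0((\bF_m)_K)$ (where $m\in\{0,1\}$ has the same parity as $n$), deforms both collections to honest full exceptional collections on the del Pezzo generic fiber $(\bF_m)_K$ (using the deformation lemmas, upper semicontinuity, and \cite[Theorem 6.11]{MR1286839}), invokes the transitivity theorem \cite[Theorem 4.6.1]{MR2108443} for \emph{actual} full exceptional collections on del Pezzo surfaces to obtain $g\in G_4$, and then transports the conclusion back through $\varphi$ using the compatibility of the $G_4$-action on $K_0$ with this isometry. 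At no point does the paper rely on transitivity of $G_4$ on arbitrary semiorthonormal bases of the Euler lattice.

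That lattice-level transitivity is exactly where your argument breaks down. The statement you call the ``key structural fact'' --- that the group generated by mutations and sign changes acts transitively on \emph{all} semiorthonormal bases of a fixed unimodular lattice --- is false as a general principle: transitivity on numerical exceptional collections is a delicate, form-specific phenomenon (it can fail on rational surfaces of higher Picard rank, which is the numerical mechanism behind phantom constructions), and for the rank-four forms relevant here it is precisely the nontrivial Markov-type-equation analysis of \cite{MR1073089}, not a formal consequence of semiorthonormality plus unimodularity. Moreover, even granting that corollary, you cannot quote it for $\bF_n$ with arbitrary $n$ without first identifying $(K_0(\bF_n),\chi)$ with the lattice it actually treats: the Euler form genuinely depends on $n$ in a natural line-bundle basis (for instance $\chi(\cO,\cO(C))=2-n$ for the negative section $C$), so one must exhibit an isometry onto the $\bF_0$ or $\bF_1$ lattice, compatible with the $G_4$-actions on both sides. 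You flag this only as a ``normalization'' bookkeeping issue and never resolve it; supplying exactly this isometry is the work done by the paper's deformation argument (via deformation invariance of the Euler pairing). To repair your proof you would have to either verify the scope and hypotheses of the cited corollary for every $n$ --- including the isometry above and any positivity restrictions built into the Markov-type analysis --- or fall back on the paper's deformation route.
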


\begin{proof}
Because of the existence of the deformation
\pref{eq:degenaration_of_Hirzebruch_surface_after_completion},
there exists a canonical isomorphism
\begin{equation*}
\varphi\colon K_0(\bF_n)\xrightarrow[]{\cong} K_0((\bF_m)_K) 
\end{equation*}
which preserves the Euler pairings on both sides.

Consider the deformation of the collections
\pref{eq:exceptional_collection_E_1_E_2_E_3_E_4}
and
\pref{eq:standard_collection}
to
$(\bF_m)_K$, which automatically are full exceptional collections on
$(\bF_m)_K$ by the upper semi-continuity of the cohomology and
\cite[Theorem 6.11]{MR1286839}.
Denote them by
\begin{equation}\label{eq:deformed_collection}
(\cE_1)_K,\dots,(\cE_4)_K
\end{equation}
and
\begin{equation}\label{eq:deformed_standard_collection}
(S_{1})_{K},\dots,(S_{4})_{K}
\end{equation}
respectively.
By the transitivity
\cite[Theorem 4.6.1]{MR2108443},
we can find an element
$g \in G_4$
which sends
\pref{eq:deformed_collection}
to
\pref{eq:deformed_standard_collection}.

On the other hand, by definition, the actions of
$G_4$
on the level of
$K_0$ are compatible with the isometry
$\varphi$.
In particular the classes in
$K_0(\bF_2)$
of the collection
\[
g(\cE_1,\dots,\cE_4)
\]
should be the same as those of the collection
\pref{eq:standard_collection}. Thus we conclude the proof.
\end{proof}

\begin{remark}
Let
$\scC'$
be the set of full exceptional collections of
$D(\bF_0)$, and
$\scC:=\scC'/\bZ^4$.
The action of
$B_4$ on
$\scC$
is known to be transitive, but not free.
To see this, consider the full exceptional collection
$\cO, \cO(0,1), \cO(1,0), \cO(1,1)$.
If we denote by
$\sigma_{23}\in B_4$
the flip of the 2nd and the 3rd threads, then
$\sigma_{23}^2$ fixes the collection; it is simply because
there is no derived morphism between
$\cO(0,1)$ and
$\cO(1,0)$.
Note that, on the other hand, $\sigma_{23}^2$ acts nontrivially
on the degeneration of the collection to
$\bF_2$.

It is intriguing to understand the stabilizer group of this action,
and to see if the action of that stabilizer subgroup
on the set of full exceptional collections of
$D(\bF_2)$
is realized by autoequivalences
\pref{eq:group_of_double_spherical_twists}
of
$D(\bF_2)$.
\end{remark}

\bibliographystyle{amsalpha}
\bibliography{mainbibs}

\newcommand{\etalchar}[1]{$^{#1}$}
\def\cprime{$'$} \def\cprime{$'$}
\providecommand{\bysame}{\leavevmode\hbox to3em{\hrulefill}\thinspace}
\providecommand{\MR}{\relax\ifhmode\unskip\space\fi MR }
\providecommand{\MRhref}[2]{%
  \href{http://www.ams.org/mathscinet-getitem?mr=#1}{#2}
}
\providecommand{\href}[2]{#2}
\begin{thebibliography}{FGI{\etalchar{+}}05}

\bibitem[BK89]{Bondal-Kapranov_Serre}
A.~I. Bondal and M.~M. Kapranov, \emph{Representable functors, {S}erre
  functors, and reconstructions}, Izv. Akad. Nauk SSSR Ser. Mat. \textbf{53}
  (1989), no.~6, 1183--1205, 1337. \MR{MR1039961 (91b:14013)}

\bibitem[BP93]{MR1230966}
A.~I. Bondal and A.~E. Polishchuk, \emph{Homological properties of associative
  algebras: the method of helices}, Izv. Ross. Akad. Nauk Ser. Mat. \textbf{57}
  (1993), no.~2, 3--50. \MR{1230966 (94m:16011)}

\bibitem[BP14]{MR3162236}
Nathan Broomhead and David Ploog, \emph{Autoequivalences of toric surfaces},
  Proc. Amer. Math. Soc. \textbf{142} (2014), no.~4, 1133--1146. \MR{3162236}

\bibitem[FGI{\etalchar{+}}05]{MR2222646}
Barbara Fantechi, Lothar G{{\"o}}ttsche, Luc Illusie, Steven~L. Kleiman, Nitin
  Nitsure, and Angelo Vistoli, \emph{Fundamental algebraic geometry},
  Mathematical Surveys and Monographs, vol. 123, American Mathematical Society,
  Providence, RI, 2005, Grothendieck's FGA explained. \MR{2222646
  (2007f:14001)}

\bibitem[GK04]{MR2108443}
A.~L. Gorodentsev and S.~A. Kuleshov, \emph{Helix theory}, Mosc. Math. J.
  \textbf{4} (2004), no.~2, 377--440, 535. \MR{2108443 (2005i:14020)}

\bibitem[Gor88]{MR966982}
A.~L. Gorodentsev, \emph{Exceptional bundles on surfaces with a moving
  anticanonical class}, Izv. Akad. Nauk SSSR Ser. Mat. \textbf{52} (1988),
  no.~4, 740--757, 895. \MR{966982 (90d:14017)}

\bibitem[HL10]{Huybrechts-Lehn}
Daniel Huybrechts and Manfred Lehn, \emph{The geometry of moduli spaces of
  sheaves}, second ed., Cambridge Mathematical Library, Cambridge University
  Press, Cambridge, 2010. \MR{2665168 (2011e:14017)}

\bibitem[HT10]{MR2578562}
Daniel Huybrechts and Richard~P. Thomas, \emph{Deformation-obstruction theory
  for complexes via {A}tiyah and {K}odaira-{S}pencer classes}, Math. Ann.
  \textbf{346} (2010), no.~3, 545--569. \MR{2578562 (2011b:14030)}

\bibitem[Huy06]{MR2244106}
D.~Huybrechts, \emph{Fourier-{M}ukai transforms in algebraic geometry}, Oxford
  Mathematical Monographs, The Clarendon Press Oxford University Press, Oxford,
  2006. \MR{2244106 (2007f:14013)}

\bibitem[IU05]{Ishii-Uehara_ADC}
Akira Ishii and Hokuto Uehara, \emph{Autoequivalences of derived categories on
  the minimal resolutions of {$A\sb n$}-singularities on surfaces}, J.
  Differential Geom. \textbf{71} (2005), no.~3, 385--435. \MR{MR2198807}

\bibitem[KO94]{MR1286839}
S.~A. Kuleshov and D.~O. Orlov, \emph{Exceptional sheaves on {D}el {P}ezzo
  surfaces}, Izv. Ross. Akad. Nauk Ser. Mat. \textbf{58} (1994), no.~3, 53--87.
  \MR{1286839 (95g:14048)}

\bibitem[Kod86]{MR815922}
Kunihiko Kodaira, \emph{Complex manifolds and deformation of complex
  structures}, Grundlehren der Mathematischen Wissenschaften [Fundamental
  Principles of Mathematical Sciences], vol. 283, Springer-Verlag, New York,
  1986, Translated from the Japanese by Kazuo Akao, With an appendix by Daisuke
  Fujiwara. \MR{815922 (87d:32040)}

\bibitem[Kul97]{MR1604186}
S.~A. Kuleshov, \emph{Exceptional and rigid sheaves on surfaces with
  anticanonical class without base components}, J. Math. Sci. (New York)
  \textbf{86} (1997), no.~5, 2951--3003, Algebraic geometry, 2. \MR{1604186
  (99c:14028)}

\bibitem[Lan04]{MR2085175}
Adrian Langer, \emph{Moduli spaces of sheaves in mixed characteristic}, Duke
  Math. J. \textbf{124} (2004), no.~3, 571--586. \MR{2085175 (2005g:14082)}

\bibitem[Nog90]{MR1073089}
D.~Yu. Nogin, \emph{Helices of period four and {M}arkov-type equations}, Izv.
  Akad. Nauk SSSR Ser. Mat. \textbf{54} (1990), no.~4, 862--878. \MR{1073089
  (91m:14026)}

\bibitem[ST01]{Seidel-Thomas}
Paul Seidel and Richard Thomas, \emph{Braid group actions on derived categories
  of coherent sheaves}, Duke Math. J. \textbf{108} (2001), no.~1, 37--108.
  \MR{MR1831820 (2002e:14030)}

\end{thebibliography}

\noindent
Shinnosuke Okawa

Department of Mathematics,
Graduate School of Science,
Osaka University,
1-1 Machikaneyama,
Toyonaka,
Osaka,
560-0043,
Japan.

{\em e-mail address}\ : \  okawa@math.sci.osaka-u.ac.jp
\ \vspace{0mm} \\

\noindent
Hokuto Uehara

Department of Mathematics and Information Sciences,
Tokyo Metropolitan University,
1-1 Minamiohsawa,
Hachioji-shi,
Tokyo,
192-0397,
Japan.

{\em e-mail address}\ : \  hokuto@tmu.ac.jp
\ \vspace{0mm} \\

\end{document}